 \let\doendproof\endproof  
\renewcommand\endproof{~\hfill\qed\doendproof}
\spnewtheorem{quest}[theorem]{Question}{\itshape}{\normalfont}
\spnewtheorem{obs}[theorem]{Observation}{\itshape}{\normalfont}
\spnewtheorem{conj}[theorem]{Conjecture}{\itshape}{\normalfont}
\newcommand{\conv}[1]{{\rm conv}(#1)}
\newcommand{\h}[1]{{\rm hn}(#1)}
\newcommand{\setm}{\backslash\hspace{-.05cm}}
\definecolor{cmar}{RGB}{39,118,125}
\newcommand{\hn}[1]{\ensuremath{hn(#1)}}
\title{Convexity in partial cubes: the hull number}
\author{Marie Albenque\inst{1} \and Kolja Knauer\inst{2}}
\institute{LIX UMR 7161, \'Ecole Polytechnique, CNRS, France \and LIF UMR 7279, Universit\'{e} Aix-Marseille, CNRS, France}
\begin{document}

\maketitle

\begin{abstract}
We prove that the combinatorial optimization problem of determining the hull number of a partial cube is NP-complete. This makes partial cubes the minimal graph class for which NP-completeness of this problem is known and improves earlier results in the literature.

On the other hand we provide a polynomial-time algorithm to determine the hull number of planar partial cube quadrangulations.

Instances of the hull number problem for partial cubes described include
poset dimension and hitting sets for interiors of curves in the plane.

To obtain the above results, we investigate convexity in partial cubes and obtain a new characterization of these graphs in terms of their lattice of convex subgraphs. This refines a theorem of Handa. Furthermore we provide a topological representation theorem for planar partial cubes, generalizing a result of Fukuda and Handa about tope graphs of rank $3$ oriented matroids. 
\end{abstract}

\section{Introduction}
The objective of this paper is the study of convexity and particularly of the hull number problem on different classes of partial cubes. Our contribution is twofold. First, we establish that the hull number problem is NP-complete for partial cubes, second, we emphasize reformulations of the hull number problem for certain classes of partial cubes leading to interesting problems in geometry, poset theory and plane topology. In particular, we provide a polynomial time algorithm for the class of planar partial cube quadrangulations.

Denote by $Q^d$ the hypercube graph of dimension $d$. A graph $G$ is called a \emph{partial cube} if there is an injective mapping $\phi:V(G)\to V(Q^d)$ such that $d_G(v,w)=d_{Q^d}(\phi(v),\phi(w))$ for all $v,w\in V(G)$, where, $d_G$ and $d_{Q^d}$ denote the graph distance in $G$ and $Q^d$, respectively. This is, for each pair of vertices of $\phi(G)$, at least one shortest path in $Q^d$ belongs to $\phi(G)$. In other words $\phi(G)$, seen as a subgraph of $Q^d$, is an \emph{isometric embedding} of $G$ in $Q^d$. One often does not distinguish between $G$ and $\phi(G)$ and just says that $G$ is an \emph{isometric subgraph} of $Q^d$.

Partial cubes were introduced by Graham and Pollak in~\cite{Gra-71} in the study of interconnection networks and continue to find strong applications; they form for instance the central graph class in media theory (see the recent book~\cite{Epp-08}) and frequently appear in chemical graph theory e.g.~\cite{Epp-09}. Furthermore, partial cubes ``present one of the central and most studied classes of graphs in all of the metric graph theory'', citing~\cite{Kla-12}. 

Partial cubes form a generalization of several important graph classes,  thus have also many applications in different fields of mathematics. 
This article discusses some examples of such families of graphs including Hasse diagrams of upper locally distributive lattices or equivalently antimatroids~\cite{Fel-09} (Section~\ref{sec:ULD}), region graphs of halfspaces and hyperplanes (Section~\ref{sec:NP}), and tope
graphs of oriented matroids~\cite{Cor-82} (Section~\ref{sec:plan}). 
These families contain many graphs defined on sets
of combinatorial objects: flip-graphs of strongly connected
and acyclic orientations of digraphs~\cite{Cor-07}, linear extension graphs of
posets~\cite{Tro-92} (Section~\ref{sec:linext}), integer tensions of digraphs~\cite{Fel-09}, configurations of chip-firing games~\cite{Fel-09}, to
name a few.

\medskip
Convexity for graphs is the natural counterpart of Euclidean convexity and is defined as follows; a subgraph $G'$ of $G$ is said to be \emph{convex} if all shortest paths in $G$ between vertices of $G'$ actually belong to $G'$. The \emph{convex hull} of a subset $V'$ of vertices -- denoted $\conv{V'}$ -- is defined as the smallest convex subgraph containing $V'$.
Since the intersection of convex subgraphs is clearly convex, the convex hull of $V'$ is the intersection of all the convex subgraphs that contain $V'$. 

A subset of vertices $V'$ of $G$ is a \emph{hull set} if and only if $\conv{V'}=G$. The \emph{hull number} or \emph{geodesic hull number} of $G$, denoted by $\hn{G}$, is the size of a smallest hull set. It was introduced in~\cite{Eve-85}, and since then has been the object of numerous papers. Most of the results on the hull number are about bounds for specific graph classes, see
e.g.~\cite{Cha-00,Her-05,Can-06,Cac-10,Dou-10,Cen-13}.
Only recently, in~\cite{Dou-09} the focus was set on computational aspects
of the hull number and it was proved that determining the hull number of a graph is
NP-complete. In particular, computing the convex hull of a given set of vertices was shown to be polynomial time solvable. The NP-completeness result was later strengthened to bipartite graphs in~\cite{Ara-13}. On the
other hand, polynomial-time algorithms have been obtained for unit-interval graphs,
cographs and split graphs~\cite{Dou-09}, cactus graphs and $P_4$-sparse
graphs~\cite{Ara-13}, distance hereditary graphs and chordal graphs~\cite{Kan-13}, and $P_5$-free triangle-free graphs in~\cite{Ara-13b}.
Moreover, in~\cite{Ara-13b}, a fixed parameter tractable algorithm to compute the hull number of any graph was obtained. Here, the parameter is the size of a vertex cover.

\medskip

Let us end this introduction with an overview of the results and the organization of this paper. 
Section~\ref{sec:cutpart} is devoted to properties of convexity in partial cubes and besides providing tools for the other sections, its purpose is to convince the reader that convex subgraphs of partial cubes behave nicely.
A characterization of partial cubes in terms of their convex subgraphs is given. In particular, convex subgraphs of partial cubes behave somewhat like polytopes in Euclidean space. Namely, they satisfy an analogue of the Representation Theorem of Polytopes~\cite{Zie-95}.

In Section~\ref{sec:NP} the problem of determining the hull number of a partial cube is proved to be NP-complete, improving earlier results of~\cite{Dou-09} and~\cite{Ara-13}. Our proof indeed implies an even stronger result namely that determining the hull number of a region graph of an arrangement of halfspaces and hyperplanes in Euclidean space is NP-complete.

In Section~\ref{sec:linext} the relation between the hull number
problem for linear extension graphs and the dimension problem of posets is discussed. We
present a quasi-polynomial-time algorithm to compute the dimension of a poset
given its linear extension graph and conjecture that the problem is polynomial-time solvable.
 
Section~\ref{sec:plan} is devoted to planar partial cubes. 
%There is  a characterization of planar partial cubes in terms of expansion procedures, see~\cite{Izt-08}. 
We provide a new characterization of this graph class, which is a topological representation theorem generalizing work of Fukuda and Handa on rank $3$ oriented matroids~\cite{Fuk-93}. This characterization is then used to obtain a polynomial-time algorithm that computes the hull number of planar partial cube quadrangulations. We conjecture the problem to be polynomial time solvable for general planar partial cubes.

In Section~\ref{sec:ULD} we study the lattice of convex subgraphs of a graph. First, we prove that given this lattice the hull-number of a graph can be determined in quasi-polynomial time and conjecture that the problem is indeed polynomial time solvable. We then prove that for any vertex $v$ in a partial cube $G$, the set of convex subgraphs of $G$ containing $v$ ordered by inclusion forms an upper locally distributive lattice. This leads to a new characterization of partial cubes, strengthening a theorem of Handa~\cite{Han-93}. 

We conclude the paper by giving the most interesting open questions in Section~\ref{sec:con}.
%\section{Convexity in partial cubes}\label{sec:conv}

\section{Partial cubes and cut-partitions}\label{sec:cutpart}
All graphs studied in this article are connected, simple and undirected. 
%We use the classic graph terminology of~\cite{Bon-08}. 
Given a connected graph $G$ a \emph{cut} $C\subseteq E$ is a set of edges whose removal partitions $G$ into exactly two connected components. These components are called its \emph{sides} and are denoted by $C^+$ and $C^-$. For $V'\subset V$, a cut $C$ \emph{separates} $V'$ if both $C^+\cap V'$ and $C^-\cap V'$ are not empty. A \emph{cut-partition} of $G$ is a set $\mathcal{C}$ of cuts partitioning $E$. For a cut $C\in\mathcal{C}$ and $V'\subseteq V$ define $C(V')$ as $G$ if $C$ separates $V'$ and  as the side of $C$ containing $V'$, otherwise.

\begin{obs}\label{obs:bipartite}
 A graph $G$ is bipartite if and only if $G$ has a cut-partition.
\end{obs}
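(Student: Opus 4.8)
The plan is to prove the two implications separately: the forward one ($G$ has a cut-partition $\Rightarrow$ $G$ bipartite) is immediate from a parity argument, while the converse needs an inductive construction of a cut-partition.

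For the forward direction, suppose $G$ admits a cut-partition $\mathcal{C}$. I would use that each cut $C\in\mathcal{C}$ has exactly two sides $C^+,C^-$ partitioning $V$, that every edge of $C$ joins $C^+$ to $C^-$ (otherwise $C$ would not be inclusion-minimal), and that every edge outside $C$ lies inside a single side. Fix a cycle $Z$. Traversing $Z$ as a closed walk and recording on which side of a fixed cut $C$ we are, we switch sides exactly when we use an edge of $C$ and must return to the starting side; hence $|E(Z)\cap C|$ is even for each $C\in\mathcal{C}$. Since $\mathcal{C}$ partitions $E$, the length $|E(Z)|=\sum_{C\in\mathcal{C}}|E(Z)\cap C|$ is a sum of even numbers, so $Z$ is even. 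Having no odd cycle, $G$ is bipartite.

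For the converse, suppose $G$ is bipartite with parts $A$ and $B$, and I would argue by induction on $|E(G)|$. The base case is a single edge, which is itself a cut. If $G$ has no cut vertex, then for every $b\in B$ the graph $G-b$ is connected, so the set of edges incident with $b$ is an inclusion-minimal cut with sides $\{b\}$ and $V\setminus\{b\}$; as every edge has a unique endpoint in $B$, grouping edges by their endpoint in $B$ yields a cut-partition. If $G$ has a cut vertex $v$, I would split the components of $G-v$ into two nonempty groups and let $G_1,G_2$ be the two connected bipartite subgraphs obtained by adding $v$ to each group; they share only $v$ and satisfy $E(G_1)\sqcup E(G_2)=E(G)$, so each has strictly fewer edges and, by induction, a cut-partition. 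The delicate point is to lift a cut of $G_i$ to a cut of $G$: given such a cut with sides $X$ and $V(G_i)\setminus X$, orient it so that $v\notin X$. Since $v$ separates $V(G_1)\setminus\{v\}$ from $V(G_2)\setminus\{v\}$ in $G$, there is no edge of $G$ from $X$ to the other subgraph, so the same edge set is exactly the cut of $G$ with sides $X$ and $V(G)\setminus X$; the former is connected because $G[X]=G_i[X]$, and the latter is connected since it contains the side $V(G_i)\setminus X$ (which contains $v$) together with the whole other subgraph (which also contains $v$). Collecting the lifted cuts of $G_1$ and $G_2$ partitions $E(G)$ into cuts of $G$.

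The main obstacle is exactly this lifting step in the inductive case: an inclusion-minimal cut of a subgraph need not remain one in the whole graph, and after deleting an arbitrary cut the two sides are typically joined through many vertices, so the sub-cuts interfere. Splitting at a \emph{cut vertex} is what reduces the attachment to a single vertex $v$, and orienting each sub-cut to the side avoiding $v$ is what guarantees its survival as a cut of $G$; checking connectivity of both sides after lifting is the one routine point that still needs care.
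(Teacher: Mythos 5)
Your proof is correct; note that the paper itself offers no proof of this Observation (it is stated as self-evident), so the only possible comparison is on the merits of your argument. Your forward direction is the standard parity argument: every cycle crosses each cut an even number of times, and since the cuts partition $E$, every cycle has even length. Your converse also works, including the delicate lifting step; the one point you leave implicit is inclusion-minimality of the lifted edge set in $G$, but that is immediate since each of its edges joins the two components of its complement, so restoring any single edge reconnects $G$. However, the induction on cut vertices is unnecessary: your first case can be refined into a direct construction valid for every connected bipartite $G$. For each $b\in B$ and each connected component $K$ of $G-b$, let $C_{b,K}$ be the set of edges joining $b$ to $K$. The only edges of $G$ leaving $K$ end at $b$, so removing $C_{b,K}$ leaves exactly the two connected pieces $K$ and $V\setminus K$ (the latter consists of $b$ together with the other components of $G-b$, each still attached to $b$), and every edge of $C_{b,K}$ joins these two pieces; hence $C_{b,K}$ is a cut. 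Moreover, each edge $ab$ of $G$ lies in exactly one such set, namely the one indexed by its unique endpoint $b\in B$ and the component $K$ of $G-b$ containing $a$, so the sets $C_{b,K}$ form a cut-partition. This collapses your two cases and the lifting argument into one paragraph; when $G$ has no cut vertex, each $G-b$ is connected and the construction reduces exactly to your grouping of edges by their endpoint in $B$.
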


% The equivalence classes of the \emph{Djokovi{\'c}-Winkler relation} of a partial cube~\cite{Djo-73,Win-84} can be interpreted as the cuts of a cut-partition. Reformulating some properties of these equivalence classes as well as some results from~\cite{Che-86,Ban-89} the following new characterization of partial cubes in terms of cut partitions can be obtained, with a simple self-contained proof.

The equivalence classes of the \emph{Djokovi{\'c}-Winkler relation} of a partial cube~\cite{Djo-73,Win-84} can be interpreted as the cuts of a cut-partition. The following characterization of partial cubes is a reformulation of some properties of the Djokovi{\'c}-Winkler equivalence classes as well as some results from~\cite{Che-86,Ban-89}. We provide a simple self-contained proof.

\begin{theorem}~\label{thm:equiv}
 A connected graph $G$ is a partial cube if and only if $G$ admits a cut-partition $\mathcal{C}$ satisfying one of the following equivalent conditions: 
\begin{itemize}
       \item[(i)] there is a shortest path between any pair of vertices using no $C\in\mathcal{C}$ twice
       \item[(ii)] no shortest path in $G$ uses any $C\in\mathcal{C}$ twice
       \item[(iii)] for all $V'\subseteq V: \conv{V'}=\bigcap_{C\in\mathcal{C}} C(V')$
       \item[(iv)] for all $v,w\in V: \conv{\{v,w\}}=\bigcap_{C\in\mathcal{C}} C(\{v,w\})$
\end{itemize}
We call such a cut-partition a \emph{convex cut-partition}.
\end{theorem}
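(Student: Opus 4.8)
The plan is to prove the four conditions equivalent for a fixed cut-partition $\mathcal{C}$ and then to match them with the partial-cube property, organised as two cycles of implications sharing $(ii)$: namely $(i)\Leftrightarrow(ii)$ together with $(i)\Leftrightarrow\text{partial cube}$, and $(ii)\Rightarrow(iii)\Rightarrow(iv)\Rightarrow(ii)$. Two implications are free: $(ii)\Rightarrow(i)$ because $G$ is connected, so shortest paths exist and by $(ii)$ none repeats a cut; and $(iii)\Rightarrow(iv)$ is the restriction to two-element sets. Note that $G$ is bipartite by Observation~\ref{obs:bipartite}. Throughout I will use the following consequence of $(ii)$: a shortest path between two vertices on the same side of a cut $C$ meets $C$ an even number of times but at most once, hence not at all, so each side $C^+,C^-$ is convex.

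For $(i)\Rightarrow(ii)$ and the link to partial cubes I would count crossings. Any $v$–$w$ path crosses every cut separating $\{v,w\}$ an odd number of times and every other cut an even number of times, so its length is at least the number $s(v,w)$ of cuts separating $v$ and $w$; a path as in $(i)$ attains this bound, whence $d(v,w)=s(v,w)$ and every shortest path crosses each separating cut exactly once and no other cut at all, which is $(ii)$. Recording for each vertex its side of every $C\in\mathcal{C}$ gives a map into $\{0,1\}^{\mathcal{C}}$ whose Hamming distance equals $s(v,w)=d(v,w)$; this is an isometric embedding, so $G$ is a partial cube. Conversely, if $G$ is a partial cube I would take as $\mathcal{C}$ the coordinate classes: each is a genuine cut (its two sides are joined only across it and are internally connected by geodesics that never flip that coordinate), they partition $E(G)$, and a geodesic flips each coordinate at most once, giving $(ii)$.

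The core is $(ii)\Rightarrow(iii)$. Since each $C(V')$ is either $G$ or a convex side and all contain $V'$, the set $\bigcap_{C\in\mathcal{C}}C(V')$ is convex and contains $V'$, giving $\conv{V'}\subseteq\bigcap_{C}C(V')$. The reverse inclusion I would get from a separation lemma: if $K$ is convex and $u\notin K$, some cut has $K$ on one side and $u$ on the other. I first prove this when $u$ is adjacent to $K$, say $u\in C^-$ with a neighbour $k_0\in K\cap C^+$ across a cut $C$. If some $k\in K\cap C^-$ existed, then, comparing the distances of the adjacent pair $u,k_0$ to $k$, either $d(k_0,k)=d(u,k)+1$, putting $u$ on a shortest $k_0$–$k$ path and hence in $K$ by convexity, or $d(k_0,k)=d(u,k)-1$, putting $k_0$ on a shortest $u$–$k$ path; but such a path lies in $C^-$ (as $u,k\in C^-$ and $C^-$ is convex) and so cannot contain $k_0\in C^+$. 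Both cases are absurd, so $K\subseteq C^+$. For arbitrary $u\notin K$ I take a shortest path from $u$ to a nearest vertex $k^*\in K$: its penultimate vertex is adjacent to $K$, the last edge $p\,k^*$ determines a cut $C$ with $K\subseteq C^+$, and since a geodesic crosses $C$ only at that last edge, the whole initial segment, and in particular $u$, lies in $C^-$. Applying this to $K=\conv{V'}$ and a hypothetical $u\in\bigl(\bigcap_{C}C(V')\bigr)\setminus\conv{V'}$ yields a cut not separating $V'\subseteq C^+$ yet with $u\in C^-$, contradicting $u\in\bigcap_{C}C(V')$; hence equality holds.

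Finally $(iv)\Rightarrow(ii)$ closes the second cycle: by $(iv)$, if $v,w$ lie on the same side of a cut then $\conv{\{v,w\}}$, which contains every geodesic between them, is contained in that side, so a shortest path crossing a cut twice would have a subpath joining two same-side vertices while leaving the side, which is impossible. I expect the separation lemma inside $(ii)\Rightarrow(iii)$ to be the main obstacle; every other step is a parity count or a subpath argument, whereas this is the only place where the convexity of halfspaces must be leveraged to control a point $u$ that may be far from $K$, and reducing the general case to the adjacent case via a geodesic to the nearest point is the key manoeuvre.
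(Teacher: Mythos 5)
Your proof is correct, and although it relies on the same toolkit as the paper (parity of cut crossings, convexity of the sides, bipartite distance comparisons, the side-indicator embedding into the hypercube), it follows a genuinely different route at the key step. Structurally, the paper runs one cycle $(ii)\Rightarrow(iii)\Rightarrow(iv)\Rightarrow(i)\Rightarrow(ii)$ and proves the partial-cube equivalence against $(ii)$ separately, while you run two cycles sharing $(ii)$ and fold the hypercube embedding into the crossing count for $(i)\Rightarrow(ii)$; a nice byproduct is that your identity $d(v,w)=s(v,w)$ makes explicit what the paper's embedding argument dismisses as ``clearly'' isometric. More substantially, for the hard inclusion $\bigcap_{C\in\mathcal{C}}C(V')\subseteq\conv{V'}$ in $(ii)\Rightarrow(iii)$, you prove a standalone separation lemma (any convex $K$ and any $u\notin K$ are separated by some cut), established first when $u$ is adjacent to $K$ via a bipartite distance trichotomy, and then in general by a geodesic-to-nearest-point reduction. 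The paper instead picks a vertex $v$ of $\bigcap_{C\in\mathcal{C}}C(V')\setminus\conv{V'}$ adjacent to $\conv{V'}$ -- tacitly using that the intersection is convex, hence connected, so that such a $v$ exists -- then finds an edge of the separating cut inside $\conv{V'}$ and shows by an explicit path construction that $v$ would be absorbed into $\conv{V'}$, a contradiction. Your version buys modularity and handles explicitly the reduction step that the paper leaves implicit: the separation lemma is the graph analogue of hyperplane separation (essentially statement $(iii)$ specialized to convex sets) and is reusable elsewhere, at the cost of one extra reduction; the paper's inline argument is shorter but less self-contained. Finally, your $(iv)\Rightarrow(ii)$ and the paper's $(iv)\Rightarrow(i)$ are the same subpath argument in different clothing, so no real divergence there.
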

\begin{proof}
We start by proving that the existence of a cut-partition satisfying $(i)$ is equivalent to $G$ being a partial cube. Assume that $G$ is a partial cube. Let $d$ be the minimal integer, such that $G$ can be embedded into $Q^d$. Fix such an embedding and label %a Hamming labeling defines a cut partition. E
each edge $\{u,v\}$ by the (unique) coordinate for which $u$ and $v$ differ. Let $C_i$ denote the set of edges labeled $i$, since $d$ is minimal, the deletion of $C_i$ disconnects $G$. To see that $C_i$ is a cut: 
let $\{x,y\}$ and $\{u,v\}$ be two edges of $C_{i}$ such that $x$ and $u$ (resp. $y$ and $v$) have the same $i$-th coordinate. Any shortest path in $Q^d$ between $x$ and $u$ avoids $C_i$ and at least one of them belongs to the embedding of $G$ in $Q^d$. Hence, there is a path in $G$ from $x$ to $u$ (and similarly from $y$ to $v$) that do not contain any edges of $C_{i}$. Therefore, there exists a path from $x$ to $y$ which contains only $\{u,v\}$ as an edge of $C_{i}$. Hence $C_i$ is inclusion-minimal. 

%if $x,u\in G$ have the same $i$-th coordinate, then they admit a shortest path that does not use any edges of $C_i$. For two edges $\{x,y\}$ and $\{u,v\}$ in $C_i$, if $\{x,y\}$ is deleted, $x$ and $y$ are still connected via $\{u,v\}$. 

Reciprocally, let $\mathcal{C}=(C_i)_i$ be a cut-partition satisfying $(i)$, map every $v\in V$ to the $(0,1)$-vector 
$(x(v)_i)_{i\in|\mathcal{C}|}:=(|C^+_i\cap\{v\}|)_{C_i\in\mathcal{C}}$. For $u,v \in V$ and $C\in \mathcal{C}$, if $u$ and $v$ lie on the same side (resp. on opposites sides) of $C$, then a path between them contains an even (resp. odd) number of edges of $C$. By $(i)$, there exists a shortest path between them that contains exactly one edge of each cut that separates them. The embedding of the latter path by $x$ yields a shortest path in $Q^{|\mathcal{C}|}$ between $x(v)$ and $x(u)$. It then defines an isometric embedding of $G$ into a hypercube, hence $G$ is a partial cube. 
%subgraph  Hamming labeling of $G$, hence $G$ is a partial cube. 
\medskip

We now prove that the four conditions are equivalent: 

\noindent $(ii)\Rightarrow (iii):$ By $(ii)$ every shortest path crosses any $C\in\mathcal{C}$ at most once. Thus, for every $C\in\mathcal{C}$ its sides $C^+$ and $C^-$ are convex subgraphs of $G$. Since the intersection of convex sets is convex, the set $\bigcap_{C\in\mathcal{C}} C(V')$ is convex and since for every subset $V'$ and cut $C$, $\conv{V'}\subset C(V')$, we obtain that $\conv{V'}\subset \bigcap_{C\in\mathcal{C}} C(V')$.\\
Reciprocally, assume there exists $V'\subset G$ such that $\bigcap_{C\in\mathcal{C}} C(V')\setm\ \conv{V'}$ is not empty and pick an element $v$ in this set adjacent to some $w\in\conv{V'}$. Say $\{v,w\}\in C_i\in\mathcal{C}$, both $v$ and $w$ belong to $C_i(V')$, the latter must be equal to $G$ and hence $C_i$ separates $V'$. Let $\{x,y\}\in C_i$ with $x,y\in \conv{V'}$. By Observation~\ref{obs:bipartite}, $G$ is bipartite and we can assume for some $k$ that $d_G(x,w)=k$ and $d_G(y,w)=k+1$. By $(ii)$ no shortest $(x,w)$-path $P$ may use an edge of $C_i$, because otherwise a shortest $(y,w)$-path would use two edges of $C_i$. Extending $P$ to a $(y,v)$-path $P'$ of length $k+2$ cannot yield a shortest path because $P'$ uses $C_i$ twice. Thus, $d_G(y,v)\leq k+1$ but by bipartiteness we have $d_G(y,v)=k$. So there is a shortest $(y,v)$-path $P$ of length $k$ which therefore does not use $w$. Extending $P$ to $w$ yields a shortest $(y,w)$-path using $v$. Hence $v\in\conv{V'}$.

\noindent $(iii)\Rightarrow (iv)$ is clear. To prove $(iv)\Rightarrow (i)$, if there exists a shortest path between $v,w \in V$ that uses a $C_i$ more than once, then there exist two vertices $x$ and $y$ along this path, so that a shortest path between $x$ and $y$ uses $C_i$ exactly twice. This contradicts $(iv)$ with respect to $\conv{x,y}$.

\noindent $(i)\Rightarrow (ii):$ note that if $C_i$ separates $x$ and $y$, then all paths between $x$ and $y$ must contain at least one edge of $C_i$. Hence, if there is one shortest path using each of those cuts exactly once, then any shortest path must also use exactly once this set of cuts. 
\end{proof}

%Indeed, the cuts of a cut-partition satisfying the theorem are the equivalence classes of the \emph{Djokovi{\'c}-Winkler relation} of a partial cube~\cite{Djo-73,Win-84}. In this language one finds that $(iii)$ can be reformulated to obtain results from~\cite{Che-86,Ban-89}.
Note that $(iii)$ resembles the
\emph{Representation Theorem for Polytopes}, see~\cite{Zie-95}; where the
role of points is taken by vertices and the halfspaces are mimicked by the sides
of the cuts in the cut-partition. 

Note that $(iii)$ gives an easy polynomial time algorithm to compute the convex hull of a set $V'$ of vertices, by just taking the intersection of all sides containing $V'$. But as mentioned earlier, this is polynomial time also for general graphs~\cite{Dou-09}. More importantly, thanks to $(iii)$, the hull number problem has now a very useful interpretation as a hitting set problem:

\begin{corollary}\label{cor:hull}
Let $\mathcal{C}$ be a convex cut-partition then $V'$ is a hull set if and only if there exists a vertex of $V'$ on both sides of $C$, for all $C\in\mathcal{C}$.
\end{corollary}

 With a little more work one gets:
\begin{corollary}\label{cor:onesidedhull}
Let $\mathcal{C}$ be a convex cut-partition. For $v\in V$ denote by $h_v$ the size of a smallest set of vertices hitting $V\setminus C(v)$ for all $C\in \mathcal{C}$. We have $\hn{G}=\min_{v\in V}h_v+1$.
\end{corollary}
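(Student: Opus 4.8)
The plan is to derive the formula directly from the hitting-set reformulation of hull sets given in Corollary~\ref{cor:hull}. First I would record the meaning of $C(v)$: since a single vertex is never separated by a cut, $C(v):=C(\{v\})$ is exactly the side of $C$ containing $v$, so $V\setminus C(v)$ is the opposite side, which is nonempty because both sides of a cut are nonempty. Thus $h_v$ is the minimum size of a transversal of the family $\{V\setminus C(v):C\in\mathcal{C}\}$, and it is always well defined and finite. The whole argument then reduces to moving between \emph{a set meeting both sides of every cut} (hull sets, by Corollary~\ref{cor:hull}) and \emph{a set meeting the $v$-free side of every cut} (the sets counted by $h_v$), the difference being a single, cleverly chosen vertex $v$.

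For the inequality $\hn{G}\le \min_{v\in V}h_v+1$ I would fix an arbitrary $v\in V$ and let $W$ be a smallest transversal of $\{V\setminus C(v):C\in\mathcal{C}\}$, so $|W|=h_v$. I claim $W\cup\{v\}$ is a hull set. Indeed, for every $C\in\mathcal{C}$ the vertex $v$ lies on the side $C(v)$, while $W$ contains a vertex of the opposite side $V\setminus C(v)$; hence $W\cup\{v\}$ meets both sides of $C$, and Corollary~\ref{cor:hull} gives that it is a hull set. Therefore $\hn{G}\le|W\cup\{v\}|\le h_v+1$, and since $v$ was arbitrary, $\hn{G}\le\min_{v}h_v+1$.

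For the reverse inequality I would start from a minimum hull set $V'$, so $|V'|=\hn{G}$, and pick any $v\in V'$ (if $G$ has an edge then $V'$ must meet both sides of some cut and is in particular nonempty; the edgeless single-vertex case is handled directly, as then there are no cuts, $h_v=0$ and $\hn{G}=1$). Setting $W:=V'\setminus\{v\}$, I claim $W$ is a transversal of $\{V\setminus C(v):C\in\mathcal{C}\}$. Fix $C\in\mathcal{C}$: by Corollary~\ref{cor:hull}, $V'$ contains a vertex on the side $V\setminus C(v)$ opposite to $v$; this vertex is distinct from $v$, hence lies in $W$, so $W$ meets $V\setminus C(v)$. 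Consequently $h_v\le|W|=\hn{G}-1$, whence $\min_{u}h_u+1\le h_v+1\le\hn{G}$. Combining the two inequalities yields the claimed equality.

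There is no serious obstacle here: once Corollary~\ref{cor:hull} is available the statement is essentially bookkeeping. The only points that require care are the observation that fixing a single $v$ saturates exactly one side of every cut ``for free'' (which is what produces the additive $+1$), and the trivial degenerate case in which $G$ has no cuts.
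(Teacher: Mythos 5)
Your proof is correct and follows essentially the same route as the paper's: both directions are obtained by extending a minimum transversal of the sets $V\setminus C(v)$ by the vertex $v$ itself, and conversely by deleting one vertex $u$ from a minimum hull set and invoking Corollary~\ref{cor:hull}. Your write-up merely spells out the bookkeeping (and the trivial one-vertex case) that the paper leaves implicit.
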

\begin{proof}
 Extending a hitting set of size $h_u$ by $u$ yields a hitting set of all sides of $\mathcal{C}$ of size at most $h_u+1$. Thus, $\hn{G}\leq\min_{v\in V}h_v+1$, by Corollary~\ref{cor:hull}. 
 
Conversely let $H$ be a minimal hull set, i.e. such that $|H| = \hn{G}$. By Corollary~\ref{cor:hull}, $H$ has a vertex on both sides of $C$, for any $C$ in the cut partition. Hence, for $u$ in $H$, $H\backslash\{u\}$ hits $V\setminus C(u)$ for all $C\in \mathcal{C}$. Therefore, the size of $H\backslash \{u\}$ is at most $h_u$. It leads to $\hn{G}-1\geq h_u\geq \min_v h_v$, which concludes the proof. 
\end{proof}

\section{NP-completeness of hull number in partial cubes}\label{sec:NP}
The section is devoted to the proof of the following result: 

\begin{theorem}\label{thm:NP}
 Given a partial cube $G$ and an integer $k$ it is NP-complete to decide whether $\h{G}\leq k$.
\end{theorem}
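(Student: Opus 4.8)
The plan is to establish membership in NP and then give a polynomial-time reduction from a known NP-complete problem, exploiting the hitting-set reformulation provided by Corollaries~\ref{cor:hull} and~\ref{cor:onesidedhull}. Membership in NP is routine: given a candidate hull set $V'$ of size at most $k$, one computes $\conv{V'}$ in polynomial time (as noted after Theorem~\ref{thm:equiv}, via the intersection $\bigcap_{C\in\mathcal{C}} C(V')$) and checks whether it equals $G$; equivalently, by Corollary~\ref{cor:hull}, one verifies in polynomial time that each side of each cut in $\mathcal{C}$ meets $V'$.

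For hardness, the natural strategy is to reduce from \textbf{Vertex Cover} (or directly from a hitting-set/set-cover variant), since Corollary~\ref{cor:hull} already recasts ``$V'$ is a hull set'' as ``$V'$ simultaneously hits both sides of every cut.'' The key design task is therefore to build, from an arbitrary hitting-set instance, a partial cube whose cut-partition encodes exactly the sets we wish to hit, so that small hull sets correspond precisely to small hitting sets. I would first fix a ground set and a family of subsets, then construct a partial cube $G$ in which each required ``side'' of a cut corresponds to one subset in the family, using Corollary~\ref{cor:onesidedhull} to reduce bookkeeping: after planting one ``base'' vertex $v$, the remaining problem becomes hitting all the sets $V\setminus C(v)$, which should be arranged to mirror the target sets of the hitting-set instance. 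The guiding principle is that partial cubes can be assembled by taking isometric subgraphs of a hypercube whose coordinate cuts realize a prescribed incidence structure, so the whole construction amounts to choosing a suitable collection of $(0,1)$-vectors (a convex region of the cube) realizing the desired cut-partition, then invoking the reciprocal direction of Theorem~\ref{thm:equiv} to certify that the resulting graph is genuinely a partial cube.

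The main obstacle, and the part requiring the most care, is the gadget construction itself: one must simultaneously (a) guarantee that the constructed graph is a \emph{partial cube}, i.e.\ that the chosen edge-classes truly form a cut-partition satisfying condition~(ii) of Theorem~\ref{thm:equiv} --- no shortest path uses any cut twice --- which constrains the geometry of the vertex set inside $Q^d$, and (b) ensure the correspondence between hull sets and hitting sets is exact, so that the optimal hull number equals a simple function (via Corollary~\ref{cor:onesidedhull}) of the minimum hitting set. These two requirements pull in opposite directions: convexity/isometry demands that the vertex set be ``fat'' and connected in the cube, while the hitting-set encoding wants the cuts to carve out combinatorially arbitrary sides. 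I would resolve this by adding auxiliary vertices and coordinates (padding) that keep the configuration isometrically embedded while leaving the essential hitting structure intact, and I would verify isometry by directly checking that between any two chosen vertices some geodesic of the cube stays inside $G$.

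The remaining steps are then comparatively mechanical: prove the forward direction (a hitting set yields a hull set of the corresponding size by Corollary~\ref{cor:hull}), prove the backward direction (any hull set restricts to a hitting set of the same or smaller size), confirm that the reduction runs in polynomial time and outputs a partial cube, and conclude that deciding $\h{G}\le k$ is NP-hard, hence NP-complete. The stronger statement mentioned in the introduction --- that the same hardness holds for region graphs of arrangements of halfspaces and hyperplanes --- should follow by observing that the gadget graph can itself be realized as such a region graph, so the construction is phrased geometrically from the outset.
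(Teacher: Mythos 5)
Your proposal correctly sets up the framework that the paper also uses: membership in NP via polynomial-time convex hull computation, and the reformulation of ``$V'$ is a hull set'' as a hitting-set condition on the sides of the cuts (Corollary~\ref{cor:hull}, with Corollary~\ref{cor:onesidedhull} available for bookkeeping). But from that point on your text is a plan, not a proof, and the part you defer is precisely the mathematical content of the theorem. You write that the key design task is to build, from an arbitrary hitting-set instance, a partial cube whose cut-partition ``encodes exactly the sets we wish to hit,'' and you acknowledge the obstruction yourself: convexity of the cut sides (forced by Theorem~\ref{thm:equiv}) is in direct tension with realizing combinatorially arbitrary sets. This tension is not a technical nuisance that generic ``padding'' resolves; it is the reason the theorem is nontrivial. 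Cut sides of a partial cube are convex subgraphs, so the set systems realizable this way are severely constrained, and no argument is given (nor is one obvious) that Vertex Cover or general hitting set instances can be so realized while preserving the optimum. Asserting that one can choose ``a suitable collection of $(0,1)$-vectors realizing the desired cut-partition'' is exactly the statement that needs proof.

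The paper resolves this differently and concretely: it reduces from SAT-AM3 (satisfiability with at most three literals per clause and each variable in at most three clauses) and builds an explicit gadget $G_F$ --- a base edge $\{u,u'\}$, clause vertices $d_i$, extra vertices $d_{i,j}$ for clauses sharing a literal, and for each variable $x$ a copy $G_x$ of the neighborhood of $u$ induced by the clauses containing $x$, attached by a matching $M_x$. It then verifies by a case analysis that the resulting edge classes form a cut-partition satisfying Theorem~\ref{thm:equiv}(i), and proves both directions of ``$F$ satisfiable iff $\hn{G_F}\le n+1$.'' Note that even the correspondence between hull sets and assignments is not the naive exact hitting-set bijection your plan calls for: the backward direction needs a normalization argument (any hull set of size $n+1$ contains $u'$, exactly one vertex per $G_x$, and that vertex can be assumed WLOG to be $v_x$ or $v_{\bar x}$), which only works because of how the gadget was designed. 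Since your proposal contains no construction, no verification of the partial cube property, and no proof of either direction of the correspondence, it has a genuine gap: it identifies where the difficulty lies but does not overcome it.
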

\begin{proof}
Computing the convex hull
of a set of vertices is doable in polynomial-time in general graphs, see e.g.~\cite{Dou-09}, i.e., the problem is in NP. To prove
the NP-completeness, we exhibit a reduction from the problem SAT-AM3 described below and known to
be NP-complete~\cite{Gar-79}. Let us first recall some logical terminology. Any Boolean variable $x$ can produce two literals: either non-negated and denoted $x$ (with a slight abuse of notation) or negated and denoted $\bar x$.  
 
\begin{quote}
\begin{description}
\item[SAT-AM3:]
\item[Instance:] A formula $F$ in Conjunctive Normal Form on $m$ clauses $D_1,\ldots, D_m$, each consisting of at most three literals on variables $x_1,\ldots, x_n$. Each variable \emph{appears in at most three clauses}. \\
In other words, we can write $F=D_1\wedge D_2\wedge \dots \wedge D_m$, where for all $i \in \{1,\ldots,m\}$, $D_i = x\vee y \vee z$ or $D_i=x\vee y$, with $x,y,z \in \{x_1,\bar x_1,x_2,\bar x_2,\ldots\}$. For each variable $x$,
 \[
 \#\{i, \text{ such that } x\in D_i\text{ or }\bar x\in D_i\}\leq 3.
 \] 
\item[Question:] Is $F$ satisfiable?
\end{description}
\end{quote}

Given an instance $F$ of SAT-AM3, we construct a partial cube $G_F$ such that $F$ is satisfiable if and only if $\h{G_F}\leq n+1$. 

\medskip
Given $F$ we start with two vertices $u$ and $u'$ connected by an edge. For each $1\leq i \leq m$, introduce a vertex $d_i$ and link it to $u$. If two clauses, say $D_i$ and $D_j$, share a literal, add a new vertex $d_{i,j}$ and connect it to both $d_i$ and $d_j$.

Now for each variable $x$, introduce a copy $G_x$ of the subgraph induced by $u$ and the vertices corresponding to clauses that contain $x$ (including vertices of the form $d_{\{i,j\}}$ in case $x$ appears as the same literal in $D_i$ and $D_j$). Assume without loss of generality that each Boolean variable $x$ used in $F$ appears at least once non-negated and once negated. Then, each literal appears at most twice in $F$ and the two possible options for $G_x$ are displayed on Figure~\ref{fig:gadget}. Label the vertices of $G_x$ as follows. The copy of $u$ is labeled $u_x$. If the literal $x$ (resp. $\bar x$) appears only once in $F$ -- say in $D_i$ -- label $v_x$ (resp. $\bar v_x$) the copy of $d_i$ in $G_x$ (see Figure~\ref{fig:gadget2}). If it appears twice -- say in $D_i$ and $D_j$ -- label respectively $d_{i,x}$ and $d_{j,x}$ the copies of $d_i$ and $d_j$ and label $v_x$ (resp. $\bar v_x$) the copy of $d_{i,j}$ (see Figure~\ref{fig:gadget2}). Connect $G_x$ to the rest of the graph by introducing a matching $M_x$ connecting each original vertex with its copy in $G_x$ and call $G_F$ the graph obtained.
 %\mmar{Formal definition required ?} 
\medskip
\begin{figure}[ht]
 \centering\subfigure[\label{fig:gadget2}The variable $y$ appears twice in $F$,]{\includegraphics[width=.45\linewidth,page=2]{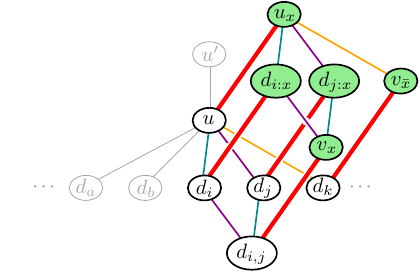}}\qquad \subfigure[\label{fig:gadget3} the variable $x$ three times.]{\includegraphics[width=.45\linewidth,page=1]{NPhull.pdf}}
\caption{General structure of the graph $G_F$, with the two possible examples of gadgets associated to a variable. Red edges correspond to the cut $M_y$ on~\subref{fig:gadget2} and $M_x$ on~\subref{fig:gadget3}.} 
\label{fig:gadget}
\end{figure}

Observe first that $G_F$ is a partial cube. Define a cut partition of $G_F$ into $n+m+1$ cuts as follows. One cut consists of the edge $(u,u')$. The cut associated to a clause $D_i$ contains the edge $\{u,d_i\}$, any edge of the form $\{d_{\{i,j\}},d_j\}$ and all the copies of such edges that belong to one of the $G_x$. Let us call this cut $C_i$. Finally, the cut associated to a variable $x$ is equal to $M_x$. This cut partition satisfies the property $(i)$ of Theorem~\ref{thm:equiv}. Indeed, for a cut $C$ denote respectively $\partial^+C$ and $\partial^-C$ the vertices in $C^+$ and $C^-$ incident to edges of $C$. Property~$(i)$ of Theorem~\ref{thm:equiv} is in fact equivalent to say that, for each cut $C\in \mathcal{C}$, there exists a shortest path between any pair of vertices of $\partial^+C$ or $\partial^-C$, that contains no edge of $C$. A case by case analysis of the different cuts in $G_F$ concludes the proof. 
%This is trivially fulfilled for the single-edge cut $(u,u')$. For a cut $C\in\{M_x,C_i\}$ one can see that by construction that $C$ is a perfect matching between $\partial^+C$ and $\partial^-C$ inducing a graph isomorphism $\partial^+C\cong\partial^-C$. Thus, shortest paths between vertices on one side of $C$ will not use edges of $C$.
%
%Now, let $P$ be a shortest $(v,w)$-path consisting of edges $(e, \ldots, c, \ldots, c', \ldots, f)$, where $c,c'\in C$ and between them no edges are in $C$. For the segment of $P$ between $c$ and $c'$ there is an isomorphic copy of it on the other side of $C$ yielding a shorter path $P'$.
%
%The following cut partition satisfies indeed Th.\ref{thm:equiv} $(i)$:   before adding the $G_x$ we have a cut partition, putting every edge $\{u,D_i\}$ into cut $C_i$, an edge $\{D_{\{i,j\}},D_i\}$ into cut $C_j$ and the edge $\{u,u'\}$ into a private cut $C'$. It is easy to check, that this cut-partition satisfies Theorem. Now whenever we add some $G_x$, we define $M_x$ as a new cut and the other new edges are put into the cuts of the edges they are copies of. It is easy to check that Theorem~\ref{thm:equiv} $(i)$ is maintained.
%
\medskip

Assume $F$ is satisfiable and let $S$ be a satisfying assignment of variables. Let $H$ be the union of $\{u'\}$ together with the following subset of vertices. For each variable $x$, $H$ contains the vertex $v_x$ if $x$ is set to true in $S$ or the vertex $\bar v_{x}$ otherwise. Let us prove that $H$ is a hull set. Since $u$ belongs to any path between $u'$ and any other vertex, $u$ belongs to $\conv H$. Moreover, for each variable $x$, the vertex $u_x$ lies on a shortest path both between $v_x$ and $u'$ and between $\bar v_{x}$ and $u'$, hence all the vertices $u_x$ belong to $\conv H$. Next, for each literal $\ell$ and for each clause $D_i$ that contains $\ell$, there exists a shortest path between $u'$ and $v_\ell$ that contains $d_i$. Then, since $S$ is a satisfying assignment of $F$, each clause vertex belongs to $\conv H$. It follows that $\conv H$ also contains all vertices $d_{i,j}$.
 
To conclude, it is now enough to prove that for all $\ell\notin S$, the vertex $v_\ell$ also belongs to $\conv H$. In the case where $\ell$ appears in only one clause $D_i$, then $v_\ell$ belongs to a shortest path between $d_i$ and $u_\ell$. In the other case, $v_\ell$ belongs to a shortest $(u_\ell,d_{i,j})$-path. Thus, $\conv H =G$. 

%Now $G_F$ contains a special vertex for every literal in $F$. If a literal $\ell$ of $x$ is contained in clauses $D_i$ and $D_u]$ the vertex $v_{\ell}$ is the copy of $D_{\{i,j\}}$ in $G_x$. If $\ell$ appears only in on clause, say $D_i$, then $v_{\ell}$ is the copy of $D_{i}$ in $G_x$. It is easy to see, that the convex hull of all vertices $v_{\ell}$ together with $u'$ is $G$.
%
%To prove ($F$ satisfiable $\Rightarrow \h{G_F}\leq n+1$), let $S$ be a satisfying assignment of $F$. Define $H:=\{v_{\ell}\in V\mid \ell\text{ is a true literal in }S\}\cup\{u'\}$. We show that $\conv{H}=G$. Since $u'\in H$ all shortest $(v_{\ell},u')$-paths are in $\conv{H}$ and therefore all vertices corresponding to satisfied clauses, i.e., all vertices corresponding to clauses. Therefore all vertices of the form $D_{\{i,j\}}$ are in $\conv{H}$. Moreover, for every $G_x$ its copy of $u_x$ is in $\conv{H}$. Now, if $v_{\ell}$ corresponds to a false literal of $x$ w.r.t. $S$ contained in $D_i$, then $v_{\ell}$ lies on a shortest $(u_x, D_i)$-path. If $v_{\ell}$ corresponds to a false literal of $x$ w.r.t. $S$ contained in $D_i$ and $D_j$, then $v_{\ell}$ lies on a shortest $(u_x, D_{\{i,j\}})$-path. Thus, $\conv{H}=G$.
\medskip
%\mmar{minor mods in the next paragraph}

Assume now that there exists a hull set $H$, with $|H|\leq n+1$. By Corollary~\ref{cor:hull}, the set $H$ necessarily contains $u'$ and at least one vertex of $G_x$ for each variable $x$. This implies that $|H|=n+1$ and therefore for all variables $x$, $H$ contains exactly one vertex $w_x$ in $G_x$. Since any vertex of $G_x$ lies either on a shortest $(u',v_x)$-path or $(u',\bar v_{x})$-path, we can assume that $w_x$ is either equal to $v_x$ or to $\bar v_{x}$.  Hence, $H$ defines a truth assignment $S$ for $F$. Now let $C_i$ be the cut associated to the clause $D_i$ and let $C_i^+$ be the side of $C_i$ that contains $d_i$. Observe that if $v_x$ belongs to $C_i^+$, then $x$ appears in $D_i$. By Corollary~\ref{cor:hull}, $H$ intersects $C_i^+$, hence there exists a literal $\ell$ such that $v_\ell$ belongs to $H$. Thus, $H$ encodes a satisfying truth-assignment of $F$.

%For ($F$ satisfiable $\Leftarrow \h{G_F}\leq n+1$) let $\conv{H}=G$ and $|H|\leq n+1$.  Hence for each $x$ we have $H\cap G_x=:\{v_x\}$. It is easy to see, that if $v_x$ lies on a shortest $(u_x, v_{\ell})$-path for one of the literals $\ell$ of $x$, then replacing $v_x$ by $v_{\ell}$ in $H$ does not affect the convex hull. Thus, we may assume that $H$ encodes a truth-assignment for $F$. If no vertex corresponding to a true literal of a clause $D_i$ was in $H$, then the side of the cut $C_i$ containing the vertex $D_i$ would have empty intersection with $H$, contradicting $\conv{H}=G$. \end{proof}
%
\end{proof}

The gadget in the proof of Theorem~\ref{thm:NP} is a relatively special partial cube and the statement can thus be strengthened. For a polyhedron $P$ and a set $\mathcal{H}$ of hyperplanes in $\mathbb{R}^d$, the \emph{region graph} of $P\setminus\mathcal{H}$ is the graph whose vertices are the connected components of $P\setminus\mathcal{H}$ and where two vertices are joined by an edge if their respective components are separated by exactly one hyperplane of $\mathcal{H}$. Note that if we set $P=\mathbb{R}^d$, then the region graph is just the usual region graph of the hyperplane arrangement $\mathcal{H}$ and therefore has hull number $2$. However, adding $P$ to the setting increases the difficulty. Indeed, the proof of Theorem~\ref{thm:NP} can be adapted 
%(see details in Appendix~\ref{pf:NP}) 
to obtain:
\begin{corollary}\label{cor:NP}
 Let $P\subset \mathbb{R}^d$ be a polyhedron and $\mathcal{H}$ a set of hyperplanes. It is NP-complete to compute the hull number of the region graph of $P\setminus\mathcal{H}$.
% Equivalently, it is NP-complete to find a minimum point set in $P\setminus\mathcal{H}$ whose convex hull intersects all hyperplanes of $\mathcal{H}$.
\end{corollary}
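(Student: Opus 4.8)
The plan is to reuse the reduction from \textbf{SAT-AM3} of Theorem~\ref{thm:NP} verbatim and to add a single geometric layer: I would realize the gadget $G_F$ as the region graph of an explicit pair $(P,\mathcal{H})$ in such a way that the cut-partition $\mathcal{C}$ used in the proof of Theorem~\ref{thm:NP} is \emph{exactly} the partition of $G_F$ induced by the hyperplanes. First I would record the two facts that make this legitimate. On the one hand, every region graph of a polyhedron cut by hyperplanes is again a partial cube: assigning to each region the sign vector recording on which side of each $H\in\mathcal{H}$ it lies realizes $\mathcal{H}$ as a cut-partition, and since $P$ is convex the straight segment between two cell representatives crosses each $H$ at most once, which is condition $(i)$ of Theorem~\ref{thm:equiv}; membership in NP is inherited as in Theorem~\ref{thm:NP}. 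On the other hand, if the realization carries the cuts of $\mathcal{C}$ bijectively to the hyperplanes of $\mathcal{H}$, then Corollary~\ref{cor:hull} characterizes the hull sets of $G_F$ and of the region graph by the \emph{same} hitting-set condition, so the two graphs have equal hull number and the equivalence ``$F$ is satisfiable iff $\h{G_F}\le n+1$'' transfers unchanged.

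With these reductions in place the whole task reduces to a geometric realization of $G_F$ that uses exactly one hyperplane per cut of $\mathcal{C}$: a hyperplane $H_0$ for $\{u,u'\}$, a hyperplane $H_{C_i}$ for each clause cut $C_i$, and a hyperplane $H_{M_x}$ for each matching $M_x$. Exploiting that $G_F$ is planar (it is drawn as such in Figure~\ref{fig:gadget}), I would build the arrangement in $\mathbb{R}^2$ and clip it by a convex polygon $P$. Concretely, I would first lay down the ``base'' arrangement whose cells are $u,u'$ and the clause and shared-literal vertices $d_i,d_{i,j}$ — this part is essentially a tree of bounded local complexity and is easy to coordinatize — and then realize each variable gadget $G_x$ as a thin parallel strip of new cells created by the single line $H_{M_x}$, mirroring the convex chunk of the base to which $G_x$ is attached. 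The adjacencies are then forced rather than checked by hand: within any hyperplane arrangement two cells lying on opposite sides of a single $H$ and on the same side of every other hyperplane necessarily share a facet on $H$, so the retained cells that differ in exactly one coordinate are automatically joined in the region graph; since $G_F$ is isometrically embedded, these are precisely its edges. The polygon $P$ is chosen to retain exactly the cells corresponding to $V(G_F)$, to discard the spurious cells of the full arrangement, and to preserve the facets witnessing these adjacencies.

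The step I expect to be the main obstacle is the realizability, by genuine straight hyperplanes rather than pseudolines, of the combinatorial arrangement prescribed by $G_F$, together with the requirement that the clipping region $P$ be an honest convex polyhedron cutting out precisely the desired cells. In general a planar partial cube corresponds only to a pseudoline arrangement and stretchability may fail; this is exactly why the sentence preceding the statement stresses that the gadget is ``relatively special''. I would resolve this using the very restricted shape of the gadgets: since each variable occurs in at most three clauses, every piece $G_x$ and every clause star is of bounded size, so one can give explicit rational coordinates for the lines and for the vertices of $P$ and verify, by a finite case analysis paralleling the one already carried out on the cuts of $\mathcal{C}$ in the proof of Theorem~\ref{thm:NP}, that the straight-line arrangement clipped by $P$ has region graph isomorphic to $G_F$ with $\mathcal{H}$ inducing $\mathcal{C}$. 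Once this explicit realization is in hand, the NP-completeness claim follows immediately from Theorem~\ref{thm:NP}.
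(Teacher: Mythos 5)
There is a genuine gap, and it sits at the pivot of your whole construction: the claim that $G_F$ is planar. Figure~\ref{fig:gadget} only depicts the local structure (the clause star and the two possible variable gadgets); it is not a drawing of $G_F$ for an arbitrary SAT-AM3 instance, and in general $G_F$ is \emph{not} planar. Take four clauses that pairwise share a literal, e.g.\ $D_1=a\vee b\vee c$, $D_2=a\vee d\vee e$, $D_3=b\vee d\vee f$, $D_4=c\vee e\vee f$ (each clause has three literals and each variable occurs twice, so this is a legal instance; adding clauses such as $\bar{a}\vee\bar{b}$, $\bar{c}\vee\bar{d}$, $\bar{e}\vee\bar{f}$ meets the paper's assumption that every variable also occurs negated, keeps every variable in at most three clauses, and introduces no new shared-literal vertices among $D_1,\dots,D_4$). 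Then $u,d_1,d_2,d_3,d_4$ are joined pairwise by internally disjoint paths: $u$ to each $d_i$ by an edge, and $d_i$ to $d_j$ through the shared-literal vertex $d_{i,j}$. This is a subdivision of $K_5$. Since the region graph of any arrangement of lines clipped by a convex polygon in $\mathbb{R}^2$ is a subgraph of the dual of a planar subdivision, hence planar, no choice of lines and clipping polygon can realize such a $G_F$; the ``explicit rational coordinates plus finite case analysis'' you defer to cannot exist for these instances. (Even when $G_F$ happens to be planar there is a further obstacle you do not address: a convex clip can only discard cells of the full line arrangement lying \emph{outside} $P$, whereas spurious cells of an arrangement can be enclosed by wanted ones; and the stretchability issue you flag would remain on top of that.)

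The paper avoids all of this by abandoning the plane, which is exactly what the statement's arbitrary dimension $d$ is for. It realizes $G_F$ in $\mathbb{R}^{m+n+2}$: the hyperplanes are the coordinate hyperplanes $H_i=\{y\mid y_i=1\}$, essentially one per cut of $\mathcal{C}$ (so your requirement that cuts map bijectively to hyperplanes is met for free), and $P$ is an explicit polyhedron given by inequalities of the form $y_i\geq 0$, $y_0+y_i\leq 2$, $y_i+y_j\leq 2$, $y_i+y_j+y_k\leq 3$ for suitable index families, chosen so that the only nonempty cells of $P\setminus\mathcal{H}$ are those matched with vertices of $G_F$, each described by explicit strict inequalities. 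Because the isometric embedding of the partial cube $G_F$ into the hypercube already dictates which cell each vertex occupies, realizability is automatic: there is no stretchability question and no geometric case analysis at all. Your surrounding framework is sound — region graphs of $P\setminus\mathcal{H}$ are partial cubes by the segment argument and Theorem~\ref{thm:equiv}(i), and the hull-number equivalence transfers via Corollary~\ref{cor:hull} — but the two-dimensional realization at its core fails, and with it the proof.
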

\begin{proof}
%First, observe that any shortest path between two cells of $P\backslash \mathcal{H}$ is witnessed by two points in the respective cells and a line-segment connecting them so that both statements of Corollary~\ref{cor:NP} are indeed equivalent. 
We show how to represent the gadget $G_F$ from the proof of Theorem~\ref{thm:NP} as the region graph of $P\setminus\mathcal{H}$.
Define $P$ as those $y\in \mathbb{R}^{m+n+2}$ with

\begin{center}
\begin{tabular}{ll}
  $y_i\geq 0$, & $i\in \{0,\ldots, m+n+1\}$\\
  $y_0+y_i\leq 2$, & $i\in \{1,\ldots, m+n+1\}$\\
  $y_i+y_j\leq 2$, & $\{i,j\}\subseteq \{1,\ldots, m\mid i\neq j; D_i\cap D_j=\emptyset\}$\\
  $y_i+y_j+y_k\leq 3$, & $\{i,j\}\subseteq\{1,\ldots, m\mid i\neq j; D_i\cap D_j\neq\emptyset\},$\\ ~ & $k\in \{m+1,\ldots, m+n+1\mid x_{k-m},\overline{x_{k-m}}\notin D_i\cap D_j\}$\\
  $y_i+y_j\leq 2$, & $i\in \{1,\ldots, m\},$\\ 
  ~ & $j\in \{m+1,\ldots, m+n+1\mid x_{k-m},\overline{x_{k-m}}\notin D_i\}$\\
  $y_{m+1}+\ldots +y_{m+n+2}\leq 2$ & ~
\end{tabular}
\end{center}

We slice $P$ with the family $\mathcal{H}$ of hyperplanes consisting of $H_i:=\{y\in \mathbb{R}^{m+n+2}\mid y_i=1\}$ for all $i\in \{0,\ldots, m+n+1\}$. Every vertex $v$ of $G_F$ will be identified with a cell $R_v$ of $\mathcal{H}\cap P$. In the following we describe the cells associated to vertices by giving strict inequalities for the $y\in \mathbb{R}^{m+n+2}$ in their interior. 

\begin{center}

\begin{tabular}{lcl}
  $u$ & $\rightsquigarrow$ & $0<y_k<1 \text{ for all }k$\\
  $u'$ & $\rightsquigarrow$ & $0<y_k<1 \text{ for }k\geq 1 ; 1<y_0<2$\\
  $d_j$ & $\rightsquigarrow$ & $0<y_k<1 \text{ for }k\neq j ; 1<y_j<2$\\
  $d_{\{i,j\}}$ & $\rightsquigarrow$ & $0<y_k<1 \text{ for }k\neq i,j \text{ ; } 1<y_i,y_j<2; y_i+y_j<2$\\
  $u_{x_{\ell}}$ & $\rightsquigarrow$ & $0<y_k<1 \text{ for }k\neq m+{\ell} ; 1<y_{m+{\ell}}<2$\\
  $d_{j:x_{\ell}}$ & $\rightsquigarrow$ & $0<y_k<1 \text{ for }k\neq j, m+{\ell} ; 1<y_j,y_{m+{\ell}}<2$\\
  $d_{\{i,j\}:x_{\ell}}$ & $\rightsquigarrow$ & $0<y_k<1 \text{ for }k\neq i,j,m+\ell \text{ ; } 1<y_i,y_j,y_{m+\ell}<2; y_i+y_j<2$\\
\end{tabular}

\end{center}

\end{proof}

\section{The hull number of a linear extension graph}\label{sec:linext}
%Nice special example of partial cubes are linear extension graphs of posets.
%
Given a poset $(P,\leq_P)$, a \emph{linear extension} $L$ of $P$ is a total order $\leq_L$ on the elements
of $P$ compatible with $\leq_P$, i.e., $x\leq_P y$ implies $x\leq_L y$. 
The set of vertices of the \emph{linear extension graph} $G_L(P)$ of $P$ is the set of all linear extensions of $P$ and there is an edge between $L$ and $L'$ if and only if $L$ and
$L'$ differ by a neighboring transposition, i.e., by reversing the order of two consecutive elements. 

Let us see that property $(i)$ of Theorem~\ref{thm:equiv} holds for $G_L(P)$. Each incomparable pair $x\parallel y$ of $(P,\leq_P)$ corresponds to a cut of $G_L(P)$ consisting of the edges where $x$ and $y$ are reversed. The set of these cuts is clearly a cut-partition of $G_L(P)$. Observe then that the distance between two linear extensions $L$ and $L'$ in $G_L(P)$ is equal to the number of pairs that are ordered differently in $L$ and $L'$, i.e., no pair $x\parallel y$ is reversed twice on a shortest path. Hence $G_L(P)$ is a partial cube. 

A \emph{realizer} of a poset is a set $S$ of linear extensions such that their intersection is $P$. In other words, for every incomparable pair $x\parallel y$ in $P$, there exist $L,L'\in S$ such that $x<_L y$ and $x>_{L'} y$. It is equivalent to say that, for each cut $C$ of the cut-partition of $G_L(P)$, the sets $C^+\cap S$ and $C^-\cap S$ are not empty. By Corollary~\ref{cor:hull}, it yields a one-to-one correspondence between realizers of $P$ and hull sets of $G_L(P)$. In particular the size of a minimum realizer -- called the \emph{dimension} of the poset and denoted $\dim(P)$ -- is equal to the hull number of $G_L(P)$.
The dimension is a fundamental parameter in poset combinatorics, see e.g.~\cite{Tro-92}. In particular, for every \emph{fixed} $k\geq 3$, it is NP-complete to decide if a given poset has dimension at least $k$, see~\cite{Yan-82}. But if instead of the poset its linear extension graph is considered to be the input of the problem, then we get:

\begin{proposition}\label{prop:lin}
 The hull number of a linear extension graph (of size $n$) can be determined in time $O(n^{\log n + 3})$, i.e., the dimension of a poset $P$ can be computed in quasi-polynomial-time in $G_L(P)$.
\end{proposition}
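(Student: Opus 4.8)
The plan is to turn the computation into an enumeration of small vertex subsets, using that $\hn{G_L(P)}=\dim(P)$ together with the hitting-set reformulation of hull sets. First I would compute, in polynomial time, the cut-partition $\mathcal{C}$ of $G_L(P)$ given by the Djokovi\'{c}--Winkler classes; by the discussion preceding the proposition its cuts are exactly the incomparable pairs of $P$, and it satisfies Theorem~\ref{thm:equiv}. By Corollary~\ref{cor:hull}, testing whether a given set $V'$ of vertices is a hull set reduces to checking that both sides of each cut of $\mathcal{C}$ contain a vertex of $V'$; this costs time polynomial in $n$, since there are at most $\binom{n}{2}$ cuts and each side-test is a reachability computation in $G_L(P)$. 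The algorithm then enumerates vertex subsets of increasing size $k=1,2,\dots$ and returns the first $k$ for which a hull set of size $k$ exists; this value is exactly $\hn{G_L(P)}=\dim(P)$.

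The running time is governed by how large $k$ may grow before a hull set appears, that is, by $\dim(P)$. The crux is therefore the bound $\dim(P)=O(\log n)$ for a poset with $n$ linear extensions, which I would derive from two classical facts. The first is $\dim(P)\le\operatorname{width}(P)$, so $P$ contains an antichain $A$ of size $w\ge d:=\dim(P)$. The second is a counting observation: for each of the $w!$ total orders $\sigma$ of $A$, the relation obtained by adjoining $\sigma$ to $P$ is acyclic (a directed cycle would contract along $\le_P$-paths between members of $A$ to a comparability inside $A$, contradicting that $A$ is an antichain), hence is a poset, and any of its linear extensions induces $\sigma$ on $A$. Distinct orders $\sigma$ thus force distinct linear extensions of $P$, so $n\ge w!\ge d!$. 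Solving $d!\le n$ yields $d=O(\log n)$.

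Combining the two ingredients, the search never examines subsets of size exceeding $D=O(\log n)$, so the total time is
\[
\sum_{k\le D}\binom{n}{k}\cdot\operatorname{poly}(n)\ \le\ n^{O(\log n)},
\]
which is the claimed $O(n^{c\log n})$; the same output is the dimension of $P$, giving the quasi-polynomial-time algorithm.

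The step I expect to be the main obstacle is the dimension bound: invoking $\dim(P)\le\operatorname{width}(P)$ and verifying carefully that the $w!$ shuffles of a maximum antichain really yield distinct linear extensions of $P$. Once $n\ge d!$ is established, the enumeration together with the hull-set test via Corollary~\ref{cor:hull} is routine, and no control over the number $|P|$ of poset elements is needed, since every operation is polynomial in the size $n$ of $G_L(P)$.
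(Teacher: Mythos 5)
Your proof is correct and follows essentially the same route as the paper: bound $\dim(P)\le\omega(P)$, use the counting argument $\omega(P)!\le n$ to get $\dim(P)=O(\log n)$, and then enumerate all vertex subsets of size $O(\log n)$. The only cosmetic differences are that you test candidate hull sets via the hitting-set criterion of Corollary~\ref{cor:hull} instead of computing convex hulls directly as the paper does, and that you spell out the verification of $\omega(P)!\le n$ (distinct shuffles of an antichain extend to distinct linear extensions), which the paper merely asserts.
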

\begin{proof}
An antichain in a poset is a set of mutually incomparable elements of $P$ and the width $\omega(P)$ of $P$ is the size of the largest antichain of $P$, see~\cite{Tro-92}. It is a classic result that $\dim(P)\leq \omega(P)$. Since any permutation of an antichain appears in at least one linear extension, $\omega(P)!\leq n$ and therefore $\dim(P)\leq \log(n)$. Thus, to determine the hull-number of $G_L(P)$ it suffices to compute the convex hull of all subsets of at most $\log(n)$ vertices. Since the convex hull can be computed in cubic time, by~\cite{Dou-09}, we get the claimed upper bound.
\end{proof}

Note that the linear extension graph of a poset is the region graph of a hyperplane arrangement and a polyhedron. Indeed, the \emph{order-polytope} of a poset $P$ on $d$ elements is defined by $\{x\in\mathbb{R}^d\mid i\leq_P j\Rightarrow 0\leq x_i\leq x_j\leq 1\}$. The \emph{braid-arrangement} in dimension $d$ consists of the hyperplanes $H_{i,j}:=\{x\in\mathbb{R}^d\mid x_i=x_j\}$. Stanley shows that the linear extension graph of a poset $P$ arises as the
region graph of the {order-polytope} of $P$ and the {braid-arrangement}, see~\cite{Sta-86}. Hence, determining the dimension
of a poset given its linear extension graph is a special case of the problem of
Corollary~\ref{cor:NP}. However, having Proposition~\ref{prop:lin} since it is widely believed that problems solvable in quasi-polynomial are not NP-complete (this follows from the Exponential Time Hypothesis~\cite{Imp-01}), we conjecture:

\begin{conjecture}\label{conj:linext}
 The dimension of a poset given its linear extension graph can be determined in polynomial-time.
\end{conjecture}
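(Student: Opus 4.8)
The plan is to retain the reformulation already established and push it past the quasi-polynomial barrier of the preceding proposition. By Corollary~\ref{cor:hull}, computing $\dim(P)$ amounts to finding a minimum set $S$ of vertices of $G_L(P)$ that meets both sides $C^+$ and $C^-$ of every cut $C$ of the cut-partition. Two features make this hopeful. First, there are few cuts: each edge of $G_L(P)$ lies in exactly one cut, and any spanning tree --- having $n-1$ edges --- must cross every cut (a tree path between a vertex of $C^+$ and one of $C^-$ uses an edge of $C$), so the number of cuts is at most $n-1$, i.e.\ polynomial in $n$. Second, as in the proof of the proposition, the optimum satisfies $\dim(P)\le\log n$. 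Thus we face a two-sided hitting-set problem over polynomially many cuts whose optimum is only logarithmic.

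Given this, I would aim for a fixed-parameter algorithm in the parameter $k=\dim(P)$ with running time of the form $c^{k}\cdot\mathrm{poly}(n)$ for some absolute constant $c$. The crucial point is that such a bound is genuinely polynomial here: since $k\le\log n$, one has $c^{k}\le c^{\log n}=n^{\log c}$. I would emphasize that the NP-hardness of deciding $\dim(P)\le 3$ for a fixed target does \emph{not} obstruct this, because that hardness is measured in $|P|$, whereas the present input $G_L(P)$ can be exponentially larger than $P$; a running time $c^{k}\,\mathrm{poly}(n)$ may therefore be exponential in $|P|$ and fully consistent with the lower bound. The standard toolbox to try is branching, iterative compression, or representative-set sparsification: maintain a partial solution $S$, find a cut $C$ not yet met on one side, and branch over ways to repair it, charging each branch against the budget $k$.

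The main obstacle is controlling this branching. When a cut $C$ is unmet on, say, the side $C^-$, the natural move is to add a vertex of $C^-$ to $S$, but the number of candidate vertices on that side is unbounded, so there is no immediate constant --- or even $f(k)$ --- branching factor, and a naive search collapses back to the $\binom{n}{k}=n^{\Theta(\log n)}$ enumeration of the proposition. Breaking this requires a structural argument that only a bounded number of essentially different repairs ever need to be tried, equivalently that the family of linear extensions compresses to a small set of representatives still realizing every cut. An alternative route is to pass to the hypergraph of alternating cycles on critical pairs, whose chromatic number equals $\dim(P)$ by Trotter's theory; one would then extract this hypergraph from $G_L(P)$ in polynomial time and solve the colouring, exploiting that at most $\log n$ colours are used. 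In either formulation the crux is identical: converting the logarithmic bound on the optimum, together with the full combinatorial data of $G_L(P)$, into a provably bounded search space --- and it is precisely this step that keeps the statement a conjecture rather than a theorem.
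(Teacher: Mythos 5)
The statement you are trying to prove is not a theorem of the paper at all: it is Conjecture~\ref{conj:linext}, and the paper deliberately leaves it open, offering only the quasi-polynomial algorithm of the preceding proposition (enumerate all subsets of at most $\log n$ vertices and take convex hulls) together with a heuristic appeal to the Exponential Time Hypothesis as motivation. So there is no ``paper proof'' to match; the only question is whether your argument closes the conjecture, and it does not --- as you yourself concede in your final sentence. Your scaffolding is sound and consistent with the paper: the reformulation of $\dim(P)$ as a two-sided hitting-set problem over the cuts follows from Corollary~\ref{cor:hull} and the realizer--hull-set correspondence of Section~\ref{sec:linext}; the spanning-tree argument correctly bounds the number of cuts by $n-1$ (the cuts partition $E$ and each must contain a tree edge); the bound $\dim(P)\le\log n$ is exactly the one used in the proposition's proof; and your observation that Yannakakis' hardness of deciding $\dim(P)\le 3$ is measured in $|P|$, not in $|G_L(P)|$, and hence poses no obstruction, is correct and worth stating.

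The genuine gap is the one you name: reducing the conjecture to the existence of an algorithm with running time $c^{k}\cdot\mathrm{poly}(n)$ for $k=\dim(P)$ is a valid reduction (since $c^{k}\le n^{\log c}$ when $k\le\log n$), but no such algorithm is produced. When a cut is unmet on one side, the set of candidate vertices to add is unbounded, and neither branching, iterative compression, nor representative sets is shown to yield a bounded number of essentially distinct choices; without that, the search degenerates to trying all $\binom{n}{O(\log n)}=n^{\Theta(\log n)}$ subsets, which is precisely the paper's proposition and nothing more. The alternative route through the critical-pair hypergraph has the same status: extracting the hypergraph may be feasible, but colouring it with $\le\log n$ colours in polynomial time is itself an open algorithmic problem, so the difficulty is relocated, not resolved. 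In short, your text is a correct problem analysis and a reasonable research program, but every path you sketch terminates at the same missing lemma --- a polynomially bounded search space --- and the conjecture remains exactly as open as the paper leaves it.
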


\section{Planar partial cubes}\label{sec:plan}
% Oriented matroids have many equivalent definitions. We refer to~\cite{Bjo-99} for a thorough introduction to oriented matroids and their many applications. 
% Here, we will not state a formal definition. It suffices to know that the topes of an oriented matroid $\mathcal{M}$ on $n$ elements are a subset of $\{1,-1\}^n$ satisfying several axioms. Moreover, the topes determine $\mathcal{M}$ uniquely.  Joining two topes by an edge if they differ by the sign of exactly one entry yields the \emph{tope graph} of $\mathcal{M}$. 

% From the axioms of oriented matroids it follows that the tope graph $G$ of an oriented matroid is an \emph{antipodal partial cube}, i.e., $G$ is a partial cube such that for every $u\in G$ there is a $\overline{u}\in G$ with $\conv{u,\overline{u}}=G$, see~\cite{Bjo-99}. In particular we have $\hn{G}=2$. However, not all antipodal partial cubes can be represented as the tope graphs of oriented matroids, see~\cite{Han-93}, and finding a general graph theoretical characterization is one of the big open problems in oriented matroid theory. The exception is for tope graphs of oriented matroids of rank at most $3$ which admit a characterization as planar antipodal partial cubes, see~\cite{Fuk-93}. In order to provide an accurate topological background for this characterization and our generalization we need a few definitions.
Surprisingly enough, some results about oriented matroids and more specifically about the tope graphs of oriented matroids can be rephrased in the context of partial cubes. We refer the interested reader to~\cite{Bjo-99} for a thorough introduction to oriented matroids and their many applications. 
We introduce here only the definitions needed to state our results.

A \emph{Jordan curve} is a simple closed curve in the plane. For a Jordan curve $S$ its complement $\mathbb{R}^2\setminus S$ has two components: one is bounded and is called its \emph{interior}, the other one, unbounded, is called its \emph{exterior}. The closure of the interior and of the exterior of $S$ are denoted $S^+$ and $S^-$, respectively. An \emph{arrangement} $\mathcal{S}$ of Jordan curves is a set of of Jordan curves such that if two of them intersect they do so in a finite number of points. 
% A similar but different concept are \emph{quasiline arrangements} recently introduced in~\cite{Bok-14}, where in particular all intersections have to be crossings. 
The \emph{region graph} of an arrangement $\mathcal{S}$ of Jordan curves is the graph whose vertices are the connected components of $\mathbb{R}^2\setminus\mathcal{S}$, where two vertices are neighbors if their corresponding components are separated by exactly one element of $\mathcal{S}$. 

An \emph{antipodal partial cube} $G$ is a partial cube such that for every $u\in G$ there is a $\overline{u}\in G$ with $\conv{u,\overline{u}}=G$. In particular we have $\hn{G}=2$. The characterization of tope graphs of oriented matroids of rank at most $3$ by Fukuda and Handa 
may be rephrased as: 

\begin{theorem}[\cite{Fuk-93}]\label{thm:fukhanoriginal}
 A graph $G$ is an antipodal planar partial cube if and only if $G$ is the region graph of an arrangement $\mathcal{S}$ of Jordan curves such that for every $S, S'\in \mathcal{S}$ we have $|S\cap S'|=2$ and for $S, S', S''\in \mathcal{S}$ either $|S\cap S'\cap S''|=2$ or $|S^+\cap S'\cap S''|=|S^-\cap S'\cap S''|=1$.
\end{theorem}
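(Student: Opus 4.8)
The plan is to prove Theorem~\ref{thm:fukhanoriginal} in both directions, exploiting the correspondence between antipodal structure on the region graph and a geometric central-symmetry condition on the arrangement. For the forward direction, I would start with an antipodal planar partial cube $G$ and its cut-partition $\mathcal{C}$ guaranteed by Theorem~\ref{thm:equiv}. Each cut $C\in\mathcal{C}$ partitions the vertices into the two convex sides $C^+$ and $C^-$; since $G$ is planar and $2$-connected, I would use its planar embedding to associate to each cut a Jordan curve $S_C$ separating the regions on one side from the other. The key fact to establish is that the edges of a single cut $C$ in a planar partial cube form a "zone" that can be threaded by one Jordan curve crossing exactly the edges of $C$; this is where planarity is essential. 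The antipodality $\conv{u,\overline{u}}=G$ should then translate into the combinatorial incidence conditions: $|S\cap S'|=2$ reflects that any two cuts cross exactly twice (each pair of Djokovi\'c--Winkler classes interacts in an antipodal partial cube like two great circles), and the triple condition distinguishes whether three curves pass through a common pair of points or form a small triangle with one region on each side.

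For the converse, I would take a region graph $G$ of an arrangement $\mathcal{S}$ satisfying the stated intersection conditions and show it is an antipodal planar partial cube. Planarity is essentially immediate since the region graph of a plane arrangement embeds naturally in the plane (vertices are faces, edges cross curves). To show it is a partial cube, I would exhibit a cut-partition satisfying Theorem~\ref{thm:equiv}(i): the natural candidate is to let each curve $S\in\mathcal{S}$ define a cut $C_S$ consisting of all edges of the region graph that cross $S$. The condition $|S\cap S'|=2$ is exactly what guarantees that no shortest path crosses any $C_S$ twice, because two regions separated by $S$ lie on genuinely opposite sides with the crossing pattern of curves being "wiring-diagram-like." Antipodality would follow by checking that the region graph has a central symmetry: for any region (vertex) $u$, the region $\overline{u}$ obtained by the sign-reversal $S^+\leftrightarrow S^-$ for every curve exists and satisfies $\conv{u,\overline{u}}=G$ via Corollary~\ref{cor:hull}, since $u$ and $\overline{u}$ lie on opposite sides of every cut.

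The main obstacle will be the forward direction, specifically producing the Jordan curves and verifying the delicate triple-intersection condition. Turning an abstract cut of a planar partial cube into an embedded simple closed curve requires care: one must check that the "dual zone" of a cut traces out a single closed curve rather than several disconnected arcs, which relies on the convexity of both sides $C^+$ and $C^-$ (from Theorem~\ref{thm:equiv}) together with the antipodal hypothesis forcing each zone to "wrap around." The hardest point is the trichotomy for three curves $S,S',S''$: proving that either all three pass through a common antipodal pair of points ($|S\cap S'\cap S''|=2$) or else form a central triangle with exactly one region of the triple intersection inside and one outside ($|S^+\cap S'\cap S''|=|S^-\cap S'\cap S''|=1$). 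I expect this to reduce to a local analysis of how three Djokovi\'c--Winkler classes meet, invoking the fact that in an antipodal partial cube the induced structure on any three classes behaves like a rank-$3$ configuration, which is precisely the content that the Topological Representation Theorem encodes on the oriented-matroid side.

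I would finally remark that the equivalence with the oriented-matroid formulation of~\cite{Fuk-93} is obtained by translating the arrangement of Jordan curves into a pseudoline arrangement via the Topological Representation Theorem, so that the two sets of intersection axioms correspond to the pairwise and triple crossing conditions of rank-$3$ pseudoline arrangements; this closes the loop with the known characterization.
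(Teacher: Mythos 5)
Your proposal misidentifies both where the difficulty lies and what the paper actually does. First, note that the paper does not prove Theorem~\ref{thm:fukhanoriginal} at all: it is quoted from~\cite{Fuk-93}, where it is proved that planar antipodal partial cubes are exactly the tope graphs of oriented matroids of rank at most $3$, and the paper merely rephrases that result through the Topological Representation Theorem of~\cite{Bjo-99}, which converts a rank-$3$ oriented matroid into an arrangement of Jordan curves with precisely the stated pairwise and triple intersection properties. So the legitimate short proof here is ``cite Fukuda--Handa and translate''; anything beyond that amounts to re-proving their theorem from scratch.

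Second, your attempted direct proof has genuine gaps exactly at the points that constitute the substance of Fukuda--Handa's theorem. In the forward direction, producing one Jordan curve per cut is not the hard part (an inclusion-minimal cut of a plane graph dualizes to a simple cycle, which is how the paper proves its generalization, Theorem~\ref{thm:fukhan}); the hard part is showing that antipodality forces any two of these dual curves to cross exactly twice and forces the triple trichotomy. For this you offer only the heuristic that cuts interact ``like two great circles'' and an appeal to the claim that three Djokovi\'c--Winkler classes behave like ``a rank-$3$ configuration, which is precisely the content that the Topological Representation Theorem encodes.'' That appeal is circular: the Topological Representation Theorem applies to oriented matroids, and the assertion that a planar antipodal partial cube carries an oriented-matroid structure is exactly what is to be proved, not something you may assume. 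In the converse direction, the existence of the antipodal region $\overline{u}$ --- that is, that simultaneously reversing the side of every curve yields a non-empty region --- is asserted without argument, yet for a general arrangement of Jordan curves such a region need not exist; deriving it from the stated intersection axioms is a real proof obligation, as is your claim that $|S\cap S'|=2$ by itself prevents shortest paths in the region graph from crossing a cut twice. As it stands, the proposal is a plan whose essential steps are either missing or assume the conclusion.
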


Given a Jordan curve $S$ and a point $p\in \mathbb{R}^2\setminus S$ denote by $S(p)$ the closure of the side of $S$ not containing $p$.
 An arrangement $\mathcal{S}$ of Jordan curves is called \emph{non-separating}, if for any $p\in \mathbb{R}^2\setminus\mathcal{S}$ and subset $\mathcal{S}'\subseteq \mathcal{S}$ the set $\mathbb{R}^2\setminus\bigcup_{S\in\mathcal{S}'}S(p)$ is connected. 
 
 Two important properties of non-separating arrangements are summarized in the following:
 \begin{obs}\label{obs:pseudo}
  Let $\mathcal{S}$ be a non-separating arrangement. Then the closed interiors of $\mathcal{S}$ form a family of \emph{pseudo-discs}, i.e., different curves $S,S'\in \mathcal{S}$ are either disjoint, touch in exactly one point or cross in exactly two points.
  Moreover, the closed interiors of curves in $\mathcal{S}$ have the \emph{topological Helly property}: if three of them mutually intersect, then all three of them have a point in common. 

In Figure~\ref{fig:pseudomore} and Figure~\ref{fig:pseudocomm}, we illustrate how violating respectively the pseudodisc or the topological Helly property violates the property of being non-separating.%\comment{put the other situation such as two touchings into the figure?}
 \end{obs}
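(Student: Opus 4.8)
The plan is to reformulate the non-separating hypothesis as a statement about the sign-regions of a pair (resp.\ a triple) of curves, and then to read off the pseudo-disc and Helly properties from the connectivity of these regions. Fix two curves $S,S'\in\mathcal{S}$ and write $I,E$ (resp.\ $I',E'$) for the open interior and open exterior of $S$ (resp.\ $S'$). By definition $S(p)=S^+$ exactly when $p\in E$ and $S(p)=S^-$ exactly when $p\in I$, and similarly for $S'$. Hence, picking $p$ in each of the nonempty regions $E\cap E'$, $E\cap I'$, $I\cap E'$, $I\cap I'$ and applying the non-separating property to $\mathcal{S}'=\{S,S'\}$ shows that each of these four regions is connected, since for instance $\mathbb{R}^2\setminus(S^+\cup S'^+)=E\cap E'$ and $\mathbb{R}^2\setminus(S^-\cup S'^+)=I\cap E'$. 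This is the only way the hypothesis will enter the first part.

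For the pseudo-disc property I would show that these four connectivity conditions force $|S\cap S'|\le 2$. Assume first that all intersections are transversal crossings, so their number is even, say $2k$. Inside the closed disc $S^+$ the curve $S'$ then appears as $k$ pairwise disjoint properly embedded arcs, which cut $S^+$ into $k+1$ cells whose adjacency graph is a tree, properly $2$-coloured according to membership in $I'$ versus $E'$ (crossing an arc of $S'$ flips the colour). If $k\ge 2$ this tree has at least three nodes, so one colour class contains two non-adjacent cells; since cells of the same colour share no arc of $S'$ and can meet only along $S$, the corresponding region ($I\cap I'$ or $I\cap E'$) is disconnected, contradicting the previous paragraph. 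Thus $k\le 1$, i.e.\ at most two crossings. Tangential intersections are dealt with by the same region argument: two touchings force one curve to lie nested inside the other (a non-crossing contact keeps $S'$ on one side of $S$) and pinch the region between the two curves into two pieces, again disconnecting one of the four regions. Combining these cases yields $|S\cap S'|\in\{0,1,2\}$, with a single intersection being a touching and a double intersection being two crossings, which is precisely the pseudo-disc statement; nested curves fall under the disjoint case $S\cap S'=\emptyset$.

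For the topological Helly property I would argue by contradiction. Suppose $S^+,S'^+,S''^+$ intersect pairwise but have no common point. By the first part they are pseudo-discs, so each nonempty pairwise intersection is a single contractible region; together with the empty triple intersection, the nerve of the three sets is the boundary of a triangle, whence $S^+\cup S'^+\cup S''^+$ is homotopy equivalent to a circle and its complement in $\mathbb{R}^2$ has a bounded component, a ``hole''. Choosing $p$ in the unbounded component, which lies in the exterior of all three curves so that $S(p)=S^+$, $S'(p)=S'^+$ and $S''(p)=S''^+$, the non-separating property applied to $\mathcal{S}'=\{S,S',S''\}$ asserts that $\mathbb{R}^2\setminus(S^+\cup S'^+\cup S''^+)$ is connected, contradicting the existence of the hole. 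Hence the three interiors share a point.

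The main obstacle I anticipate lies in the bookkeeping of the first part: verifying rigorously that the cells inside $S^+$ form a tree under adjacency and that two same-coloured cells cannot reconnect along $S$, and then exhaustively handling mixed configurations of crossings and tangencies. The Helly step should be conceptually clean once the pseudo-disc property is available, the only delicate point being the justification, via the nerve lemma and the contractibility of pairwise intersections of pseudo-discs, that a pairwise-intersecting triple with empty common intersection genuinely encloses a hole.
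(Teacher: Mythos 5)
Your proposal is correct and takes essentially the same route as the paper, which justifies this observation only by contraposition via Figure~\ref{fig:pseudo}: a violation of the pseudo-disc or Helly property yields a point $p$ and a subfamily whose union of sides $S(p)$ disconnects the plane, which is exactly the argument you carry out in detail through the four sign-regions and the enclosed hole. Your write-up is in fact more rigorous than the paper's figure-based justification, and the mixed crossing/tangency bookkeeping you flag as an obstacle can be settled uniformly by an Euler-formula count: the faces of $S \cup S'$ number $n+2$ for $n$ intersection points, while connectivity of the four sign-regions caps the number of faces at four, forcing $n \leq 2$.
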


\begin{figure}[t]
\centering
\subfigure[\label{fig:pseudomore}Two curves intersecting in more than $2$ points.]{\includegraphics[width=.45\linewidth,page=2]{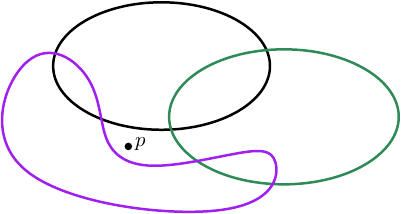}}\qquad \subfigure[\label{fig:pseudocomm}Three interiors of curves mutually intersecting without a common point.]{\includegraphics[width=.45\linewidth,page=1]{pseudo.pdf}}
\caption{\label{fig:pseudo}Illustration of Observation~\ref{obs:pseudo}. In both cases $p$ is a point proving that the arrangement is not non-separating.}
\end{figure}
 
%  every $\mathcal{S}'\subseteq \mathcal{S}$ and $\sigma\in\{+1,-1\}^{|\mathcal{S}'|}$, the complement of $\bigcup_{S\in\mathcal{S}'}S^{\sigma(S)}$ is connected, see Figure~\ref{fig:planarcube}. 
Non-separating arrangements of Jordan curves yield a generalization of Theorem~\ref{thm:fukhanoriginal}. 

\begin{theorem}\label{thm:fukhan}
 A graph $G$ is a planar partial cube if and only if $G$ is the region graph of
a non-separating arrangement $\mathcal{S}$ of Jordan curves.
\end{theorem}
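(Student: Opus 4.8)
The plan is to prove both directions of the equivalence by relating regions of the arrangement to vertices of a partial cube via a natural cut-partition, and then to invoke Theorem~\ref{thm:equiv} to certify the partial cube property. Throughout, the key dictionary is that each Jordan curve $S\in\mathcal{S}$ plays the role of a cut: it separates the plane (and hence the region graph) into $S^+$ and $S^-$, and the non-separating hypothesis is precisely what is needed to guarantee that these ``sides'' behave like convex halfspaces in the sense of Theorem~\ref{thm:equiv}$(iii)$.

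For the forward direction, suppose $G$ is a planar partial cube with cut-partition $\mathcal{C}$ given by Theorem~\ref{thm:equiv}. I would start from a fixed planar embedding of $G$ and, for each cut $C\in\mathcal{C}$, construct a Jordan curve $S_C$ that crosses exactly the edges of $C$ and no others, separating the vertex sets $C^+$ and $C^-$. The natural way to do this is to route $S_C$ through the faces of the embedding, using the fact that in a partial cube the edges of a single cut (Djokovi\'c--Winkler class) form a structured ``ladder'' across the graph. The region graph of the resulting arrangement $\mathcal{S}=\{S_C : C\in\mathcal{C}\}$ then has the vertices of $G$ as its regions, with adjacency determined by crossing a single curve, so it is isomorphic to $G$. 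The work here is to verify that $\mathcal{S}$ is non-separating: for any base point $p$ (region) and any subfamily $\mathcal{S}'$, the set $\mathbb{R}^2\setminus\bigcup_{S\in\mathcal{S}'}S(p)$ corresponds under the dictionary to $\bigcap_{C}C(p)$, which by Theorem~\ref{thm:equiv}$(iii)$ is the convex hull of $\{p\}$ together with the far regions, hence a connected subgraph. Connectivity of this intersection of sides is exactly the non-separating condition, so this direction reduces to the convexity characterization already established.

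For the reverse direction, suppose $G$ is the region graph of a non-separating arrangement $\mathcal{S}$. Planarity of $G$ is immediate, since $G$ embeds in the plane as the dual-type adjacency graph of the cells of $\mathcal{S}$. To show $G$ is a partial cube, I would take as candidate cut-partition the family $\mathcal{C}=\{C_S : S\in\mathcal{S}\}$, where $C_S$ consists of the edges of $G$ joining regions separated by $S$. I must check that each $C_S$ is genuinely a cut (inclusion-minimal and with exactly two sides $S^+, S^-$), which is where the pseudodisc and topological Helly properties from Observation~\ref{obs:pseudo} enter: they rule out the pathological configurations (curves meeting in many points, or three interiors meeting pairwise but not globally) that would let a single curve carve the plane into more than two region-connected pieces or fail to give a connected side. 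Once $\mathcal{C}$ is a valid cut-partition, I would verify condition $(iii)$ of Theorem~\ref{thm:equiv}: for a vertex set $V'$ of regions, $\bigcap_{S\in\mathcal{S}}S(V')$ translates into $\mathbb{R}^2$ minus a union of sides, and the non-separating property guarantees this is connected and coincides with the convex hull, giving the partial cube conclusion.

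The main obstacle I anticipate is the geometric construction in the forward direction, namely producing the Jordan curves $S_C$ in a fixed planar embedding so that each one crosses precisely the edges of its cut and the whole family is non-separating; controlling the topology of how these curves intersect (ensuring at most the crossings forced by the combinatorics, and no spurious ones) is delicate and likely requires a careful inductive argument, for instance adding curves one at a time or routing them along the boundaries of convex subgraphs. A secondary difficulty is making the correspondence between ``connectivity of an intersection of geometric sides'' and ``convexity of an intersection of combinatorial sides'' fully rigorous in both directions; I expect this hinges entirely on Theorem~\ref{thm:equiv}$(iii)$, so the real content is translating the non-separating hypothesis into that language cleanly rather than proving anything genuinely new about convexity.
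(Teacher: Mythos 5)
Your forward direction is essentially the paper's argument, and the obstacle you anticipate there is illusory: the curves $S_C$ need not be constructed by hand or by induction, because by planar duality the edges of a minimal cut of a plane graph are exactly the duals of the edges of a simple cycle in $G^*$, and these dual cycles, viewed as Jordan curves, are the desired arrangement. Your verification that this arrangement is non-separating (an intersection of sides of cuts is an intersection of convex subgraphs, hence convex, hence connected) is precisely what the paper does. A smaller misstep: you invoke Observation~\ref{obs:pseudo} to show that each $C_S$ is a cut in the reverse direction, but no pseudodisc or Helly property is needed (nor used by the paper) for that. For an \emph{arbitrary} arrangement, the regions inside a curve $S$ induce a connected subgraph of the region graph, since a path in the open interior of $S$ can be perturbed to cross the other curves transversally one at a time; the same holds for the exterior, so $C_S$ is always a cut and the $C_S$ always form a cut-partition.

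The genuine gap is the final step of your reverse direction, where you claim that the non-separating property guarantees that the intersection of sides ``is connected and coincides with the convex hull,'' i.e., that condition $(iii)$ of Theorem~\ref{thm:equiv} holds. Connectivity gives you neither inclusion of $(iii)$: even $\conv{V'}\subseteq\bigcap_C C(V')$ requires the sides of the cuts to be convex, which is essentially the statement you are trying to prove, and the reverse inclusion was the nontrivial implication $(ii)\Rightarrow(iii)$ in the proof of Theorem~\ref{thm:equiv}; so as written this step is unsupported and in effect circular. The paper instead argues the contrapositive through condition $(i)$: if the region graph is not a partial cube, then $(i)$ fails for the dual cut-partition, and from this failure one extracts two regions $R$ and $T$ such that \emph{every} curve contributing to the boundary of $R$ has $R$ and $T$ on the same side. (This extraction is the real combinatorial content: if no such pair existed, then from any region one could repeatedly cross a bounding curve separating the current region from $T$, strictly decreasing the number of separating curves at each step, producing for every pair a shortest path that uses each cut at most once, i.e., condition $(i)$.) For such a pair, taking $p\in R$ and $\mathcal{S}'$ the curves bounding $R$, the union $\bigcup_{S\in\mathcal{S}'}S(p)$ contains the entire boundary of $R$ while avoiding $R\cup T$, so it separates $R$ from $T$ in the plane, contradicting the non-separating hypothesis. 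Your proof needs this argument, or some equivalent route from topological connectivity to condition $(i)$; the direct claim ``connected, hence equal to the convex hull'' will not go through.
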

%\begin{figure}%
%\centering
%\parbox{2.5in}{%
% \includegraphics[width=0.6\textwidth]{Dessins/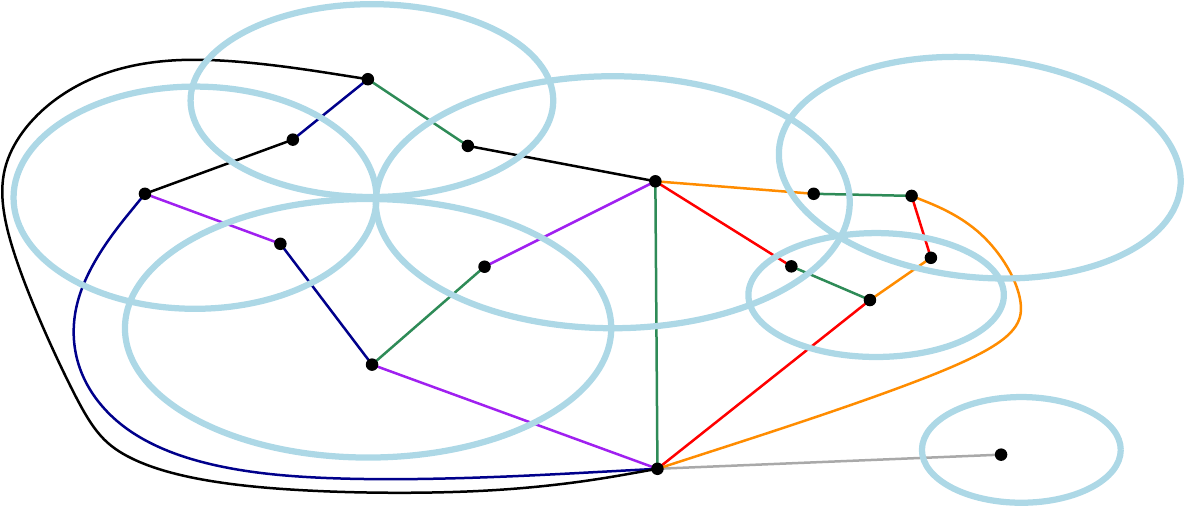}
%\caption{\label{fig:planarcube}A (non-simple) non-separating set of Jordan curves and its region graph.}}
%\qquad
%\begin{minipage}{4in}%
%\caption{Second.}%
%\label{fig:Helly}%
%\end{minipage}%
%\end{figure}%
\begin{proof}
This proof is illustrated in Figure~\ref{fig:planarcube}.
 Let $G$ be a planar partial cube with cut-partition $\mathcal{C}$. We consider
$G$ with a fixed embedding and denote by $G^*$ the planar dual. By planar
duality each cut $C\in\mathcal{C}$ yields a simple cycle $S_C$ in $G^*$. The set
of these cycles, seen as Jordan curves defines $\mathcal{S}$. Since $(G^*)^*=G$
the region graph of $\mathcal{S}$ is isomorphic to $G$. 

Now let $p\in \mathbb{R}^2\setminus \mathcal{S}$, which corresponds to vertex $v$ of $G$. Any curve $S$ in $\mathcal{S}$ corresponds to a cut $C$ and the region $S(p)$ contains exactly the subgraph $V\setminus C(v)$ of $G$. By Theorem~\ref{thm:equiv}(ii) we have that $V\setminus C(v)$ is a convex subgraph. Hence, for any $\mathcal{S}'\subseteq\mathcal{S}$, the region $\mathbb{R}^2\setminus\bigcup_{S\in\mathcal{S}'}S(p)$ contains exactly the intersection of all $C(v)$, for $C$ being associated to an element of $\mathcal{S}'$. Since the intersection of convex subgraphs is convex, this graph is convex and in particular therefore connected. Hence, $\mathbb{R}^2\setminus\bigcup_{S\in\mathcal{S}'}S(p)$ is connected and $\mathcal{S}$ is non-separating.

% 
% is a special choice of $\sigma\in\{+1,-1\}^{\mathcal{S}}$ and looking at all $S^{\sigma(S)}$. But for every
% $\mathcal{S}'\subseteq \mathcal{S}$ and $\sigma\in\{+1,-1\}^{\mathcal{S}'}$ the
% set $\mathbb{R}^2\setminus \bigcup_{S\in\mathcal{S}'}S^{\sigma(S)}$ hosts a convex
% subgraph of $G$ namely $\bigcap_{S\in\mathcal{S}'}C_S^{-\sigma(S)}$. In
% particular, the region graph of $\mathcal{S}$ induced on $\mathbb{R}^2\setminus
% \bigcup_{S\in\mathcal{S}'}S^{\sigma(S)}$ is connected and therefore
% $\mathbb{R}^2\setminus \bigcup_{S\in\mathcal{S}'}S^{\sigma(S)}$ is connected.
 
 Conversely, let $\mathcal{S}$ be a set of
Jordan curves and suppose its region graph $G$ is not a partial cube. In
particular the cut-partition $\mathcal{C}$ of $G$ arising by
dualizing $\mathcal{S}$ does not satisfy Theorem~\ref{thm:equiv} $(i)$. This
means there are two regions $R$ and $T$ such that every curve $S$ contributing to the
boundary of $R$ contains $R$ and $T$ on the same side, i.e., for any $p\in R\cup T$ and such $S$ we have $R, T\subseteq S(p)$. Let
$\mathcal{S}'$ be the union of these curves. The union
$\bigcup_{S\in\mathcal{S}'}S(p)$ separates $R$ and $T$, i.e.,
$\mathbb{R}^2\setminus\bigcup_{S\in\mathcal{S}'}S(p)$ is not connected.
\end{proof}
\begin{figure}[ht]
\centering
 \includegraphics[width=0.8\textwidth]{}
\caption{\label{fig:planarcube}A (non-simple) non-separating set of Jordan curves and its region graph.}
\end{figure}
% Theorem~\ref{thm:fukhan} can be seen as a generalization of
% Theorem~\ref{thm:fukhanoriginal}. 
%Since partial cubes can be recognized in
%quadratic time~\cite{Epp-11}, the property of being non-separating for a set of
%Jordan curves can be tested in quadratic time. 

A set of Jordan curves is \emph{simple} if no point of the plane is contained in more than two curves. %In the following, we always assume that a set $\mathcal{S}$ of Jordan curves is encoded by a planar embedding of its $1$-skeleton.

\begin{lemma}\label{lem:hitint}
 A minimum hitting set for (open) interiors of a non-separating simple set $\mathcal{S}$ of Jordan curves can be computed in polynomial-time.
\end{lemma}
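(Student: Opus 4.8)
The plan is to reduce the geometric hitting-set problem to a minimum clique cover in an auxiliary graph, and then exploit the non-separating hypothesis to show that this graph is chordal, so that the cover (and hence the hitting set) can be computed in polynomial time. Throughout, write $\mathcal{S}=\{S_1,\dots,S_n\}$ and let $I_i$ denote the (open) interior of $S_i$. Define the \emph{intersection graph} $G_\mathcal{S}$ on vertex set $\mathcal{S}$ by joining $S_i$ and $S_j$ whenever $I_i\cap I_j\neq\emptyset$. Since $\mathcal{S}$ is simple, the arrangement has only $O(n^2)$ cells, each cell lying inside a fixed subset of the interiors; hence $G_\mathcal{S}$, together with a witness cell for every edge, is computable in polynomial time.

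First I would establish the reduction. Any hitting set $H$ induces a clique cover of $G_\mathcal{S}$: for each $p\in H$ the curves whose interior contains $p$ are pairwise intersecting, hence form a clique, and every vertex is covered since $H$ is a hitting set. Conversely, by Observation~\ref{obs:pseudo} the interiors form a family of pseudo-discs with the topological Helly property; together with the planar topological Helly theorem this yields the full \emph{Helly property}: every subfamily of pairwise intersecting interiors has a common point. Consequently each clique $K$ of $G_\mathcal{S}$ can be pierced by a single point, which may be taken as any cell contained in the nonempty open set $\bigcap_{S_i\in K} I_i$. Therefore a minimum hitting set has exactly the size of a minimum clique cover of $G_\mathcal{S}$, and from such a cover one reads off an actual hitting set in polynomial time by scanning the cells.

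The key step is to show that $G_\mathcal{S}$ is chordal, i.e. has no induced cycle of length at least four. Suppose $S_{i_1},\dots,S_{i_k}$ with $k\geq 4$ induce a cycle, so that consecutive interiors intersect while non-consecutive ones are disjoint. I would argue that the closed interiors then form a good cover whose nerve is the cycle $C_k$: the only non-empty intersections are the single interiors (discs) and the overlaps of consecutive pairs (lenses, by the pseudo-disc property of Observation~\ref{obs:pseudo}), all contractible, and there are no triple intersections since $C_k$ has no triangle. By the Nerve Lemma the union $U=\bigcup_j \overline{I_{i_j}}$ is homotopy equivalent to $C_k\simeq S^1$, so $U$ is not simply connected and $\mathbb{R}^2\setminus U$ has a bounded component. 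Taking a point $p$ in the unbounded region outside all curves, each $S_{i_j}(p)$ is exactly the closed interior $\overline{I_{i_j}}$, so $\mathbb{R}^2\setminus\bigcup_j S_{i_j}(p)$ is disconnected, contradicting that $\mathcal{S}$ is non-separating. Hence $G_\mathcal{S}$ is chordal.

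Finally, chordal graphs are perfect and admit linear-time computation of a maximum independent set together with a minimum clique cover of the same size, via a perfect elimination ordering obtained from LexBFS. Running this on $G_\mathcal{S}$ and piercing each clique by a witness cell as above produces a minimum hitting set for the interiors in polynomial time. The main obstacle is the chordality argument: converting the purely topological non-separating condition into the combinatorial statement that long induced cycles cannot occur. For this, the Nerve Lemma combined with the pseudo-disc and topological Helly properties of Observation~\ref{obs:pseudo} is the essential tool, while the reduction to clique cover and the chordal algorithmics are then routine.
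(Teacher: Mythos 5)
Your strategy coincides with the paper's own proof (reduce to a minimum clique cover of the intersection graph of open interiors, prove that graph is chordal, then use perfect-graph algorithms), but there is a genuine gap at the chordality step, exactly where the paper needs the simplicity hypothesis --- which your argument never uses. Your Nerve Lemma argument takes the nerve of the \emph{closed} interiors to be the chordless cycle $C_k$, claiming that non-consecutive interiors are disjoint and that there are no triple intersections ``since $C_k$ has no triangle''. But the cycle is chordless only with respect to \emph{open} interiors, while the nerve is built from \emph{closed} interiors: by Observation~\ref{obs:pseudo}, two curves whose open interiors are disjoint may still touch in one point, so non-consecutive closed interiors can intersect, putting chords (and, when a touching point lies inside a third closed interior, even triangles) into the nerve; the conclusion that the union is homotopy equivalent to $S^1$ then fails. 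This is not a removable technicality, because without simplicity the statement you are proving is false. Consider the four unit circles centered at $(\pm1,0)$ and $(0,\pm1)$: all four pass through the origin, consecutive circles cross (their open interiors share a lens), and opposite circles touch only at the origin, so the open-interior intersection graph is a chordless $C_4$. One checks (using the symmetry, over all cells $p$ and all subsets of sides) that this arrangement is non-separating; yet the nerve of the closed discs is a full simplex, their union covers a neighborhood of the origin and is simply connected, and no contradiction with non-separation arises. Any correct chordality proof must therefore use simplicity, as the paper's does: non-separation forces a point common to all closed interiors along the cycle, and since non-consecutive open interiors are disjoint that point must lie on at least four of the curves, which simplicity forbids.

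There is a second, smaller gap in the reduction itself. The Helly step (Observation~\ref{obs:pseudo} plus the topological Helly theorem) only yields a point common to the \emph{closed} interiors of a clique, whereas a hitting set must hit \emph{open} interiors; you simply assert that the open set $\bigcap_{S_i\in K} I_i$ is nonempty. Passing from closed to open is again precisely where the paper invokes simplicity: if the closed intersection of three or more mutually (open-)intersecting interiors contained no full region, a common point would have to lie on at least three curves, which is impossible in a simple arrangement. This gap is fixable with the tools you already cite, but as written the step is unproven --- and together with the first gap it means your proof nowhere makes essential use of the simplicity hypothesis, which cannot be correct since the lemma fails without it.
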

\begin{proof}
We start by proving that finding a minimum hitting set for open interiors of a non-separating simple set $\mathcal{S}$ of Jordan curves is equivalent to finding a minimum clique cover of the intersection graph $I(\mathcal{S})$ of open interiors of $\mathcal{S}$:

First, since all open interiors containing a given point in the plane form a clique in $I(\mathcal{S})$ every hitting set corresponds to a clique cover of $I(\mathcal{S})$.

Second, we show that every inclusion maximal clique in $I(\mathcal{S})$ corresponds to a set of curves that have a point contained in the intersection of all their interiors. This is clear for cliques of size $1$ or $2$. By Observation~\ref{obs:pseudo}, if the open interiors of every triple $S_1,S_2,S_3\in\mathcal{S}$ mutually intersect, then their \emph{closed} interiors have a common point. We can apply the planar version of the classical \emph{Topological Helly Theorem}~\cite{Hel-30}, i.e., for any family of \emph{closed homology cells} in which every triple
has a point in common, all members have a point in common. (Since a pseudodisc is homeomorphic to a disc, it has trivial homology, which is the definition of closed homology cell.) Thus, any maximal clique of $I(\mathcal{S})$ corresponds to a set of curves that have a point contained in the intersection of all their \emph{closed} interiors. Now, since $\mathcal{S}$ is simple the intersection of more than two mutually intersecting closed interiors actually has to contain a region and not only a point. Thus, the maximal cliques correspond to sets of curves that have a point contained in the intersection of all their open interiors.

Since in a minimum clique cover the cliques can be assumed to be maximal, finding a minimum clique cover of $I(\mathcal{S})$ is equivalent to finding a minimum hitting set for the open interiors of $\mathcal{S}$. 
\medskip

Now, we prove that $I(\mathcal{S})$ is chordal: assume that there is a chordless cycle $C_k$ whose vertices correspond to $S_1^+, \ldots, S_k^+$ for $k\geq 4$, while the arrangement $\mathcal{S}$ being simple.
To prove that $\mathcal{S}$ is separating, we distinguish two cases:

First, assume $C_k$ is also the intersection graph of the closures of $S_1^+, \ldots, S_k^+$. That is, the closures of $S_i^+$ and $S_j^+$ intersect if and only if $|i-j|=1$ and in this case, they intersect in two points. Starting with $S_1^+$ denote one of its intersection points with $S_2^+$ by $p_{12}$, the other one by $q_{12}$. From $p_{12}$ we can follow a segment on the boundary of $S_2^+$ until we reach the first intersection point $p_{13}$ with $S_3^+$ without passing $q_{12}$. Continuing like this we construct a closed curve on the boundary of the union of the closures of $S_1^+, \ldots, S_k^+$. Analogously, starting with $q_{12}$ we can construct another such curve disjoint to the first one. Thus, the boundary of the union of the closures of $S_1^+, \ldots, S_k^+$ has two components, i.e., the union is not simply connected. Therefore its removal disconnects the plane. Note that the constructed scenario resembles the one depicted in Figure~\ref{fig:pseudocomm}.

Second, suppose that there is a point $x_{ij}$ in the intersection of the boundaries of $S_i^+$ and $S_j^+$ with $|i-j|>1$, i.e., $x_{ij}$ is a \emph{touching point} of $S_i^+$ and $S_j^+$. If $x_{ij}$ is contained in another interior $S_{\ell}^+$, then $S_{\ell}^+$ cannot entirely contain neither $S_i^+$ nor $S_j^+$, since both have a further neighbor in $C_k$ implying that $S_{\ell}^+$ intersects at least three interiors. Hence, both $S_i^+\setminus S_{\ell}^+$ and $S_j^+\setminus S_{\ell}^+$ are non-empty. Therefore, $S_{\ell}^+\setminus S_{i}^+\setminus S_{j}^+$ is disconnected and choosing a point $p\in S_{\ell}^+\setminus S_{i}^+\setminus S_{j}^+$ yields a contradiction to non-separability with respect to $S_{\ell}(p), S_{i}(p), S_{j}(p)$ . If $x_{ij}$ is contained in no other interior, then locally around $x_{ij}$ there are four regions which alternatingly are either contained or not contained in some interior. Any curve from one of the regions not contained in an interior to the other one has to intersect the boundary of the union of the closures of $S_1^+, \ldots, S_k^+$, because it has to cross closed interiors corresponding to vertices on the paths from $S_i^+$ to $S_{j}^+$ in $C_k$. Thus, $\mathcal{S}$ is separating.

% Similarly, as on the right of Figure~\ref{fig:pseudo} one can see that the intersection of the closures of the $S_1^+, \ldots, S_k^+$ must be non-empty if $\{S_1, \ldots, S_k\}$ is non-separating. %More formally, denote by $B$ the union of closure of $S_1^+, \ldots, S_k^+$. Since $\mathbb{R}^2\setminus B$ is connected, $B$ must be simply connected, i.e., each closed curve through $B$ must be contractible to a point $p$. Take a curve $C$ which traverses all the interiors in the order of the cycle. This point $p$ must lie in the intersection of all the interiors. 
% 
% Now, if there is no edge between two vertices of $I(\mathcal{S})$, the corresponding open interiors must be disjoint. The intersection of closed interiors of the $S_1^+, \ldots , S_k^+$ is therefore reduced to a single point, contradicting simplicity. 

Finally, chordal graphs form a subset of perfect graphs and hence by~\cite{Gro-84} a minimum clique cover can be computed in polynomial time. 
% Given a clique cover of the intersection graph of
% $\mathcal{S}$, cliques can be assumed to be maximal. Since the intersection of several interiors actually has to contain a region, each maximal clique corresponds to
% one region of the region graph. Picking one point in the interior of each of those regions yields a hitting set for open interiors. 
\end{proof}

Note that the above proof relies on simplicity in a two-fold way. Consider for example the non-simple arrangement in Figure~\ref{fig:planarcube}. The intersection graph of open interiors contains induced four-cycles and cliques in the intersection graphs of closed interiors cease to be cliques in the intersection graph of open interiors.

\begin{theorem}\label{thm:poly}
 A minimal hitting-set for open interiors and exteriors of a non-separating simple set $\mathcal{S}$ of Jordan curves can be computed in polynomial-time.
\end{theorem}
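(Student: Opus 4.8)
The plan is to recognize that hitting all (open) interiors and exteriors of $\mathcal{S}$ is precisely computing the hull number of the region graph $G$ of $\mathcal{S}$, and then to reduce this two-sided hitting problem to polynomially many instances of the one-sided problem already solved in Lemma~\ref{lem:hitint}.

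First I would invoke Theorem~\ref{thm:fukhan} to view $G$ as a planar partial cube whose cut-partition $\mathcal{C}$ is given by the curves of $\mathcal{S}$, the two sides of each cut being the interior and the exterior of the corresponding curve. By Corollary~\ref{cor:hull}, a set of regions hits every interior and every exterior if and only if it is a hull set of $G$; hence a minimum such hitting set is exactly a minimum hull set. I would then apply Corollary~\ref{cor:onesidedhull}, which expresses $\hn{G}=\min_{v\in V}h_v+1$, where $h_v$ is the minimum number of regions needed to hit the side $S(v)$ (the side not containing $v$) of every curve $S$. Concretely, a witnessing set $H_v$ for $h_v$ together with one point in region $v$ hits both sides of every curve, so $H_v\cup\{v\}$ is a valid hitting set, and minimizing $|H_v|+1$ over $v$ yields the optimum.

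The core step is computing each $h_v$ in polynomial time. Fix a point $p$ in the open region $v$ and apply an inversion (a stereographic homeomorphism of the sphere $S^2=\mathbb{R}^2\cup\{\infty\}$) centered at $p$, sending $p\mapsto\infty$. Since no curve of $\mathcal{S}$ passes through $p$, each $S$ is mapped to a genuine Jordan curve $\phi(S)$ avoiding $\infty$, and the far side $S(p)$ is mapped to the side of $\phi(S)$ not containing $\infty$, i.e.\ to its bounded interior. Thus hitting $\{S(p):S\in\mathcal{S}\}$ becomes hitting the interiors of the arrangement $\phi(\mathcal{S})$. It remains to check that $\phi(\mathcal{S})$ is again a non-separating simple arrangement. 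Simplicity and finiteness of intersections are obviously preserved by a homeomorphism; for the non-separating property I would argue that its defining condition is a topological statement on $S^2$ about connectivity of complements of unions of sides, hence invariant under homeomorphisms of the sphere (the single removed point $\infty$ affecting connectivity only trivially). Consequently Lemma~\ref{lem:hitint} applies to $\phi(\mathcal{S})$, computes $h_v$, and pulling the hitting set back through $\phi^{-1}$ produces the corresponding regions in the original plane.

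Finally, since by Observation~\ref{obs:pseudo} the curves of $\mathcal{S}$ form a family of $n$ pseudodiscs, $\mathbb{R}^2\setminus\mathcal{S}$ has only $O(n^2)$ regions, so there are polynomially many base points $v$ to try; for each we run the above polynomial-time computation and keep the best $H_v\cup\{v\}$. I expect the main obstacle to be the careful verification that the inversion preserves the non-separating property and correctly turns far sides into bounded interiors — in particular the bookkeeping around the point at infinity — rather than any counting, since the reduction to Lemma~\ref{lem:hitint} is then immediate. A useful sanity check is the arrangement of two disjoint curves: the naive idea of hitting all exteriors with one far-away point is suboptimal, whereas the base-point reduction (taking $v$ inside one curve) correctly recovers the two-point solution.
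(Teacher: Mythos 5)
Your proposal is correct and follows essentially the same route as the paper: reduce the two-sided problem to one-sided hitting via Corollary~\ref{cor:onesidedhull}, then for each region $v$ re-embed the arrangement (the paper phrases this as choosing $v$ as the unbounded region on the sphere, which is exactly your inversion centered in $v$) so that all far sides become interiors, and apply Lemma~\ref{lem:hitint} polynomially many times. Your explicit verification that the re-embedding preserves simplicity and the non-separating property is a detail the paper leaves implicit, but it is the same argument.
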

\begin{proof}
Viewing $\mathcal{S}$ now as embedded on the sphere, any choice of a region $v$ as the unbounded region yields a different arrangement $\mathcal{S}_v$ of Jordan curves in the plane. Denote the size of a minimum hitting set of the interiors of $\mathcal{S}_v$ by $h_v$. By Corollary~\ref{cor:onesidedhull} we know that a minimum hitting set of exteriors and interiors of $\mathcal{S}$ is of size $\min_{v\in V}h_v+1$. Since by Lemma~\ref{lem:hitint} every $h_v$ can be computed in polynomial-time and $|V|$ is linear in the size of the input, this concludes the proof.

% Let us now prove that there exist some regions $u,v$ such that $h_u<h_v$ if and only if there is a hitting set of size $h_v$ of exteriors and interiors of $\mathcal{S}$. 
% Let $h_u<h_v$ for some regions $u,v$. Extending the hitting set witnessing $h_u$ by the unbounded region in $\mathcal{S}_u$ yields a hitting set of exteriors and interiors of size at most $h_v$.
% Conversely let $H$ be a hitting set of size $h_v$ of exteriors and interiors of $\mathcal{S}$ and let $u\in H$. Now, because all sides hit by $u$ now are unbounded, $H\setminus u$ hits all bounded sides of $\mathcal{S}_u$ and therefore $h_u<h_v$.
% 
% It follows that a minimum hitting set of exteriors and interiors of $\mathcal{S}$ is of size $min_{v\in V}h_v+1$. Since by Lemma~\ref{lem:hitint} every $h_v$ can be computed in polynomial-time and $|V|$ is linear in the size of the input, we are done.

\end{proof}

Combining Corollary~\ref{cor:hull} and Theorems~\ref{thm:fukhan} and~\ref{thm:poly}, we get: 
\begin{corollary}
 The hull number of a plane quadrangulation that is a partial cube can be determined in polynomial-time.
\end{corollary}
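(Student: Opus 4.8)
The plan is to funnel the problem through the three cited results. First I would invoke Theorem~\ref{thm:fukhan} to represent the given plane quadrangulation $G$, which is a partial cube, as the region graph of a non-separating arrangement $\mathcal{S}$ of Jordan curves. Recall from the proof of that theorem that $\mathcal{S}$ arises by planar duality: each cut $C$ of the cut-partition $\mathcal{C}$ of $G$ becomes a simple closed curve $S_C$ in the dual $G^*$, the interior and exterior of $S_C$ correspond to the two sides $C^+$ and $C^-$ of the cut, and the regions of $\mathcal{S}$ are exactly the vertices of $G$.

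The key additional step is to observe that, because $G$ is a quadrangulation, the arrangement $\mathcal{S}$ is \emph{simple}. Indeed, in a partial cube the two pairs of opposite edges of any quadrilateral face belong to two distinct cuts (opposite edges of a square lie in the same Djokovi{\'c}-Winkler class). Hence every face of $G$ is incident to edges from exactly two cuts. Passing to the dual, each dual vertex, i.e.\ each primal face, lies on exactly two of the curves $S_C$; and since each dual edge belongs to a single curve, no point of the plane is contained in more than two curves of $\mathcal{S}$. This is precisely simplicity.

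It remains to reinterpret the hull number as a hitting-set problem on $\mathcal{S}$. By Corollary~\ref{cor:hull}, a vertex set is a hull set of $G$ exactly when it meets both sides $C^+$ and $C^-$ of every cut $C\in\mathcal{C}$. Under the correspondence of Theorem~\ref{thm:fukhan}, the two sides of $C$ are the interior and exterior of $S_C$, so a minimum hull set is exactly a minimum hitting set for the interiors and exteriors of $\mathcal{S}$; thus $\hn{G}$ equals the size of such a hitting set. Since $\mathcal{S}$ is non-separating and simple, Theorem~\ref{thm:poly} computes a minimum hitting set for its interiors and exteriors in polynomial time, which yields $\hn{G}$ in polynomial time.

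I expect the main obstacle to be the simplicity step: one must verify carefully that the quadrangulation hypothesis forces each dual vertex to lie on exactly two curves and that curves meet only at dual vertices, so that no triple of curves shares a common point. Once simplicity is secured, the chordality and Topological Helly arguments underlying Theorem~\ref{thm:poly} apply without change, and the remainder is bookkeeping through the dictionary between cuts, Jordan curves, and their sides.
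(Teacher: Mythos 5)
Your proposal is correct and follows exactly the paper's route: the paper's proof is precisely the one-line combination of Corollary~\ref{cor:hull}, Theorem~\ref{thm:fukhan}, and Theorem~\ref{thm:poly}. In fact you usefully make explicit the one step the paper leaves implicit, namely that the quadrangulation hypothesis forces the dual arrangement to be simple (opposite edges of each quadrilateral face lie in the same cut, so each dual vertex lies on exactly two curves), which is exactly what Theorem~\ref{thm:poly} requires.
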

Notice that in~\cite{Fow-81}, it was shown that the hitting set problem restricted to open interiors of (simple) sets of unit squares in the plane remains NP-complete and that the gadget used in that proof is indeed not non-separating.

Combined with Theorem~\ref{thm:poly}, a proof of the following conjecture would give a polynomial-time algorithm for the hull number of planar partial cubes.

\begin{conjecture}\label{conj:planar}
A minimum hitting set for open interiors of a non-separating set of Jordan curves can be found in polynomial-time.
\end{conjecture}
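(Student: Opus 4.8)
The plan is to reduce the problem to the simple case already solved in Lemma~\ref{lem:hitint}. Concretely, I would attempt to show that an arbitrary non-separating arrangement $\mathcal{S}$ of Jordan curves can be perturbed into a non-separating \emph{simple} arrangement $\mathcal{S}'$ whose interiors have the same intersection pattern (the same intersection graph $I(\mathcal{S})$) and, crucially, the same minimum hitting-set size for the open interiors. If such a perturbation exists, then applying Lemma~\ref{lem:hitint} to $\mathcal{S}'$ immediately computes a minimum hitting set for $\mathcal{S}$ in polynomial time, establishing the conjecture.

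The key steps would be as follows. First I would locate the points of the plane lying in three or more curves of $\mathcal{S}$ --- these are exactly the obstructions to simplicity. By Observation~\ref{obs:pseudo}, the closed interiors of $\mathcal{S}$ already form a family of pseudo-discs enjoying the topological Helly property, so the combinatorics of mutual intersections is controlled even when the arrangement is non-simple. Second, at each non-simple point I would nudge the participating curves apart by a small local isotopy, keeping every pair of curves meeting in at most two points and preserving which interiors mutually overlap. The central claim to verify is that this local perturbation can be done while keeping the arrangement non-separating: that is, for every region $p$ and every subfamily $\mathcal{S}''$, the set $\mathbb{R}^2\setminus\bigcup_{S\in\mathcal{S}''}S(p)$ stays connected. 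Third, I would argue that the minimum hitting number is unchanged, by checking that every maximal clique of $I(\mathcal{S})$ (witnessed by a common point of closed interiors via Helly) is still witnessed by a common point of open interiors in $\mathcal{S}'$, so that the clique-cover--to--hitting-set correspondence from the proof of Lemma~\ref{lem:hitint} transfers verbatim.

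The main obstacle I expect is exactly the preservation of the non-separating property under perturbation. Non-separation is a global connectivity condition on unions of one-sided regions, and a naive local perturbation that separates three curves at a common point can easily create a bounded complementary region where none existed --- precisely the failure illustrated on the right of Figure~\ref{fig:pseudo}. The delicate part is to choose, at each non-simple point, the \emph{correct} direction of separation so that the three (or more) closed interiors still share a small common region after perturbation rather than pulling apart into a pocket-creating configuration; the topological Helly property is what should guarantee that such a coherent choice exists simultaneously at all non-simple points. Verifying that these local choices are globally compatible, and that the resulting $\mathcal{S}'$ is genuinely non-separating, is the technical heart of the argument and the step I would spend the most care on.

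Should this perturbation approach prove too fragile, the fallback plan is to argue directly that $I(\mathcal{S})$ remains chordal even in the non-simple case --- the chordality proof in Lemma~\ref{lem:hitint} used simplicity only to rule out single-point intersections of $k\geq 4$ mutually intersecting interiors --- and then to show that in the non-simple setting a maximal clique is still hittable by a single point, so that minimum clique cover of the (still perfect) intersection graph computes the minimum hitting set via~\cite{Gro-84}.
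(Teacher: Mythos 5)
First, a point of order: the statement you were asked to prove is Conjecture~\ref{conj:planar}, which the paper explicitly leaves \emph{open} -- the authors only prove the simple case (Lemma~\ref{lem:hitint}) and the hull-number consequence (Theorem~\ref{thm:poly}), and they repeat this conjecture in the conclusions as an unsolved problem. So there is no proof in the paper to compare yours against; what matters is whether your plan could work on its own.

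It cannot, and the obstruction is concrete: in the non-simple case the minimum hitting number is \emph{not} determined by the intersection graph $I(\mathcal{S})$, so no perturbation preserving $I(\mathcal{S})$ and non-separation can also preserve the hitting number. Take three circles of radius $r$ centered at the vertices of an equilateral triangle of circumradius $r$, so that all three pass through the centroid $O$. Each pair of circles crosses in exactly two points ($O$ and one other point), the closed interiors satisfy the Helly property (their common intersection is exactly $\{O\}$), and one can check region by region that the arrangement is non-separating; but it is not simple, since $O$ lies on all three curves. The open interiors pairwise intersect, so $I(\mathcal{S})$ is a triangle with clique-cover number $1$, yet their triple intersection is empty ($O$ is a boundary point of each), so the minimum hitting set for the open interiors has size $2$. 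Now observe that any \emph{simple} non-separating arrangement with the same intersection graph has hitting number $1$, by exactly the argument of Lemma~\ref{lem:hitint} (Helly plus simplicity forces a maximal clique to be witnessed by a common open region). Hence the simple arrangement $\mathcal{S}'$ you want to perturb to does not exist: resolving the triple point so that the three interiors share a small region changes the hitting number from $2$ to $1$, while resolving it the other way produces three mutually intersecting interiors with no common point, which is precisely the separating configuration on the right of Figure~\ref{fig:pseudo}. Your fallback plan collapses at the same spot: its key step is ``a maximal clique is still hittable by a single point,'' which this example refutes, and once cliques are no longer point-witnessed, minimum clique cover of $I(\mathcal{S})$ (even if $I(\mathcal{S})$ is chordal) is only a lower bound for the hitting number, not equal to it. Bridging exactly this gap -- hitting sets versus clique covers when triple points are allowed -- is the actual content of the open conjecture.
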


\section{The lattice of convex subgraphs}~\label{sec:ULD}
In this section we study the lattice of convex subgraphs of a graph. First, we give a quasi-polynomial-time algorithm to determine the hull-number of a graph given its lattice of convex subgraphs. Second, we present another indication for how nice partial cubes behave with respect to convexity: generalizing a theorem of Handa~\cite{Han-93} we characterize partial cubes in terms of their lattice of convex subgraphs, see Figure~\ref{fig:ULD} for an illustration.

We begin by introducing some notions of lattice theory:
A partially ordered set or poset $\mathcal{L}=(X,\leq)$ is a \emph{lattice}, if for each pair of elements $x,y\in \mathcal{L}$ there exist both a unique largest element smaller than $x$  and $y$ called their \emph{meet} and denoted $x\wedge y$, and a unique smallest element larger than $x$ and $y$ called their \emph{join} and denoted $x\vee y$. Since both these operations are associative, we can define $\bigvee M:=x_1\vee\ldots\vee x_k$ and $\bigwedge M:=x_1\wedge\ldots\wedge x_k$ for $M=\{x_1,\ldots,x_k\}\subseteq\mathcal{L}$. In our setting $\mathcal{L}$ will always be finite, which implies that it has a unique global maximum $\mathbf{1}$ and a unique global minimum $\mathbf{0}$. This allows to furthermore define $\bigvee \emptyset:=\mathbf{0}$ and $\bigwedge \emptyset=:\mathbf{1}$ as respectively the minimal and maximal element of $\mathcal{L}$. 

For $\mathcal{L}=(X,\leq)$ and $x,y\in X$, one says that $y$ \emph{covers} $x$ and writes $x\prec y$ if and only if $x<y$ and there is no $z\in X$ such that $x<z<y$. The \emph{Hasse diagram} of $\mathcal{L}$ is then the directed graph on the elements of $X$ with an arc $(x,y)$ if $x\prec y$. The classical convention is to represent a Hasse diagram as an undirected graph but embedded in the plane in such a way that the orientation of edges can be recovered by orienting them in upward direction.

An element $a$ of $\mathcal{L}$ is called \emph{atom} if $\mathbf{0}\prec a$ and a lattice is called \emph{atomistic} if every element of $\mathcal{L}$ can be written as join of atoms. The following is easy to see:

\begin{obs}\label{obs:lattice}
Given a graph $G$ the inclusion order on its convex subgraphs is an atomistic lattice $\mathcal{L}_G$. The atoms of $\mathcal{L}_G$ correspond to the vertices of $G$.
\end{obs}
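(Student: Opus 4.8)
The plan is to identify the two lattice operations explicitly and then read off the atoms. First I would observe that the ground set is nonempty and bounded: the whole graph $G$ is trivially convex, so it serves as the global maximum $\mathbf{1}$, and the empty subgraph is vacuously convex, giving the global minimum $\mathbf{0}$. For the meet, I would invoke the fact already recorded in the text that the intersection of convex subgraphs is convex; hence for two convex subgraphs $A,B$ the intersection $A\cap B$ is again convex, and being contained in both and containing every convex subgraph contained in both, it is exactly the greatest lower bound $A\wedge B$.

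For the join I would use the convex hull as a closure operator. Given convex subgraphs $A$ and $B$, set $A\vee B:=\conv{V(A)\cup V(B)}$. By definition this is the smallest convex subgraph containing $V(A)\cup V(B)$, so it contains both $A$ and $B$; conversely any convex subgraph containing both contains $V(A)\cup V(B)$ and hence contains $\conv{V(A)\cup V(B)}$. Thus $A\vee B$ is the least upper bound, and $(X,\subseteq)$ is a lattice $\mathcal{L}_G$. Here I would note the small but essential point that a convex subgraph is determined by its vertex set: if $u,v$ lie in a convex subgraph and are adjacent in $G$, the edge $uv$ is a shortest path and so belongs to the subgraph, i.e.\ convex subgraphs are induced. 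This is what lets me pass freely between a convex subgraph and its vertex set.

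It then remains to describe the atoms and to prove atomisticity of the representation. Each single vertex $\{v\}$ is (vacuously) convex, and since the only subsets of $\{v\}$ are $\emptyset$ and $\{v\}$, it covers $\mathbf{0}$ and is therefore an atom; conversely any convex subgraph with at least two vertices strictly contains a single-vertex convex subgraph, so it is not an atom. Hence the atoms are exactly the one-vertex subgraphs, in bijection with $V(G)$. Finally, for atomisticity I would show $A=\bigvee_{v\in V(A)}\{v\}$ for every convex subgraph $A$: the right-hand side is $\conv{V(A)}$, which equals $A$ because $A$ is itself a convex subgraph containing $V(A)$ and is contained in every such subgraph. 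Thus every element is a join of atoms and $\mathcal{L}_G$ is atomistic.

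Since each verification is routine, there is no genuine obstacle; the only point demanding care is the implicit identification of convex subgraphs with their vertex sets, without which the join would not obviously be well defined and the equality $\conv{V(A)}=A$ could not even be stated. Once that identification is in place, the whole argument reduces to the standard closure-operator formalism.
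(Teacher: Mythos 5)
Your proof is correct, and it is exactly the routine verification the paper omits: Observation~\ref{obs:lattice} is stated there without proof (``easy to see''), and the intended argument is the one you give — meet as intersection, join as convex hull, single vertices as atoms, and $A=\conv{V(A)}$ for atomisticity. Your explicit remark that convex subgraphs are induced (so they can be identified with their vertex sets) is the one genuinely non-trivial detail, and you handle it correctly.
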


 Now, the hull number of $G$ is the size of a minimum set $H$ of atoms of $\mathcal{L}_G$ such that $\bigvee H$ is the maximum of $\mathcal{L}_G$. This suggests the following generalization of hull number to atomistic lattices. The hull number $\hn{\mathcal{L}}$ of an atomistic lattice $\mathcal{L}$ is the size of a minimum set $H$ of atoms of $\mathcal{L}$ such that $\bigvee H$ is the maximum of $\mathcal{L}$. 

\begin{proposition}
 The hull number of an atomistic lattice $\mathcal{L}$ with $n$ elements and $a$ atoms can be computed in $O(a^{c\log n})$ time. In particular, given $\mathcal{L}_G$ the hull number of $G$ with $k$ vertices can be computed in $O(k^{c\log n})$ time.
\end{proposition}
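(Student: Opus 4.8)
The plan is to follow exactly the strategy used for the linear extension graph above: first establish that the hull number of an atomistic lattice is at most $\log_2 n$, and then determine it by a brute-force search over all sufficiently small sets of atoms. The entire argument rests on one inequality, namely that $\hn{\mathcal{L}}\le \log_2 n$ for every atomistic lattice $\mathcal{L}$ with $n$ elements, which plays the same role here as the bound $\dim(P)\le\log(n)$ did before.

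To prove this inequality I would take a \emph{minimum} hull set $H=\{h_1,\ldots,h_t\}$ of atoms and consider the map $\Phi\colon 2^H\to\mathcal{L}$ sending $S\mapsto\bigvee S$. The claim is that $\Phi$ is injective; this immediately gives $2^t\le|\mathcal{L}|=n$ and hence $t=\hn{\mathcal{L}}\le\log_2 n$. To see injectivity, suppose $\bigvee S=\bigvee S'$ for two distinct subsets $S,S'\subseteq H$. Without loss of generality there is an atom $h\in S\setm S'$. Then $h\le\bigvee S=\bigvee S'$, and since $h\notin S'$ we have $S'\subseteq H\setm\{h\}$ and therefore $h\le\bigvee S'\le\bigvee(H\setm\{h\})$. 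This forces $\bigvee(H\setm\{h\})=\bigvee H=\mathbf{1}$, so $H\setm\{h\}$ is already a hull set, contradicting the minimality of $H$.

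Given this bound, the algorithm is straightforward: enumerate every set of at most $\log_2 n$ atoms, compute its join in $\mathcal{L}$, and output the size of the smallest one whose join equals the global maximum $\mathbf{1}$. The join of a set of atoms is obtained by iterating the binary join, each instance of which is computable in time polynomial in $n$ (for example by intersecting up-sets in the Hasse diagram and taking the minimum element), so each candidate set is processed in $\mathrm{poly}(n)$ time. The number of candidate sets is $\sum_{i=0}^{\lfloor\log_2 n\rfloor}\binom{a}{i}=O(a^{\log_2 n})$. Since $\mathcal{L}$ is atomistic every element is a join of atoms, whence $n\le 2^a$ and the polynomial factor $\mathrm{poly}(n)$ is absorbed into $a^{c\log n}$ after enlarging $c$. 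This yields the claimed running time $O(a^{c\log n})$. The second statement is then immediate: by Observation~\ref{obs:lattice} the atoms of $\mathcal{L}_G$ are exactly the $k$ vertices of $G$, so $a=k$.

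The main obstacle, and really the only nonroutine point, is the inequality $\hn{\mathcal{L}}\le\log_2 n$; everything else is bookkeeping. The delicate feature is that the injectivity of $\Phi$ genuinely needs the minimality of $H$: it is false for an arbitrary antichain of atoms, and it is precisely the statement that a minimum hull set behaves like an independent set of a matroid, all of whose subset-joins are distinct.
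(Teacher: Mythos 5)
Your proposal is correct and takes essentially the same approach as the paper: both bound $\hn{\mathcal{L}}\le\log n$ by showing that the joins of distinct subsets of an (inclusion-)minimal hull set are distinct, and then enumerate all sets of at most $\log n$ atoms and compute their joins. The only cosmetic difference is the injectivity argument itself---the paper compares two distinct proper subsets $H',H''\subsetneq H$ and notes $\bigvee H'\vee\bigvee(H\setminus H'')=\mathbf{1}$ while $H'\cup(H\setminus H'')\subsetneq H$, whereas you remove a single atom $h$ and show $H\setminus\{h\}$ would already be a hull set; your handling of the polynomial-time join computation and of absorbing $\mathrm{poly}(n)$ via $n\le 2^a$ is, if anything, slightly more explicit than the paper's.
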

\begin{proof}
 Let $\mathcal{L}$ be an atomistic lattice $\mathcal{L}$ with maximum $\mathbf{1}$ and set of atoms $A(\mathcal{L})$. Let $H\subseteq A(\mathcal{L})$ be inclusion minimal such that $\bigvee H=\mathbf{1}$. Then by minimality of $H$ for any two distinct subsets $H', H''\subsetneq H$ we have $\bigvee H'\neq \bigvee H''$: otherwise, $\bigvee H'\vee \bigvee (H\setminus H'')=\mathbf{1}$ but $H'\cup (H\setminus H'')\subsetneq H$. Thus, $2^{|H|}\leq |\mathcal{L}|$ and in particular $\hn{\mathcal{L}}\leq \log|\mathcal{L}|$. Hence, to compute the hull-number of $\mathcal{L}$ it suffices to compute the joins of all subsets of $A(\mathcal{L})$ of size at most $\log|\mathcal{L}|$. Assuming that the join of a set $A'$ of atoms can be computed in polynomial time, this gives the result.
 
 Note that, computing the join of a set of $A'$ of atoms in $\mathcal{L}_G$ corresponds to computing the convex hull of the vertices $A'$, which can be done in time polynomial in the size of $G$, see e.g.~\cite{Dou-09}. This yields the statement about graphs.
\end{proof}

As with Conjecture~\ref{conj:linext} it seems natural to conjecture the following:
\begin{conjecture}\label{conj:lattice}
 Given an atomistic lattice $\mathcal{L}$ with maximum $\mathbf{1}$ and set of atoms $A(\mathcal{L})$. The minimum size of a subset $H\subseteq A(\mathcal{L})$ such that $\bigvee H=\mathbf{1}$ can be computed in polynomial time.
\end{conjecture}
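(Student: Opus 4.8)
The plan is to first reformulate the problem as a covering problem over the coatoms of $\mathcal{L}$, mirroring the way Corollary~\ref{cor:hull} turned the graph hull number into a hitting-set problem over the sides of cuts. Write $M$ for the set of coatoms of $\mathcal{L}$, i.e.\ the elements covered by $\mathbf{1}$. For $H\subseteq A(\mathcal{L})$ I claim that $\bigvee H=\mathbf{1}$ if and only if for every $m\in M$ there is an atom $a\in H$ with $a\not\le m$. Indeed, in a finite lattice every element below $\mathbf{1}$ lies below some coatom, so $\bigvee H<\mathbf{1}$ forces $\bigvee H\le m$, and hence $a\le m$ for all $a\in H$, for some $m\in M$; conversely, if some coatom dominates all atoms of $H$ then $\bigvee H\le m<\mathbf{1}$. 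Associating to each atom $a$ the set $T_a:=\{m\in M\mid a\not\le m\}$, the quantity $\hn{\mathcal{L}}$ is exactly the minimum number of sets $T_a$ whose union is all of $M$. Both $M$ and the incidences $a\le m$ are computable in time polynomial in $|\mathcal{L}|$, so the entire instance of this set-cover problem is available explicitly.

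Two features make this instance look benign. First, by the counting argument that gives $\hn{\mathcal{L}}\le\log_2|\mathcal{L}|$ above, the optimum is of logarithmic size. Second, the lattice axioms constrain the sets $T_a$ through the identity $\bigvee H\le m\iff a\le m\text{ for all }a\in H$, which forces the complementary incidences $M\setminus T_a=\{m\mid a\le m\}$ to close multiplicatively along joins. The natural plan is therefore to isolate a structural property of the family $\{T_a\}_{a\in A(\mathcal{L})}$ guaranteed by these axioms that places the instance in a polynomially solvable subclass of set cover: for instance total balancedness of the incidence matrix, integrality of its covering LP, or a representation of the minimum cover as a minimum clique cover in a perfect graph, exactly as the planar case was resolved in Lemma~\ref{lem:hitint} and Theorem~\ref{thm:poly} via chordality. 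Failing a global structural statement, a second line of attack is a dynamic program exploiting the bounded height of $\mathcal{L}$: process the lattice rank by rank and maintain, for each element realizable as a join of few atoms, a cheapest atom set producing it, pruning with the $\log_2|\mathcal{L}|$ bound on the optimum.

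The main obstacle is that set cover (equivalently hitting set) is NP-hard in general and $W[2]$-hard parameterized by the solution size, so neither the logarithmic optimum nor a generic fixed-parameter approach suffices on its own: any polynomial-time algorithm must genuinely exploit structure peculiar to lattices, and it is not at all clear which property the axioms supply. A preliminary reduction already suggests where the slack lies: an arbitrary set-cover instance does \emph{not} obviously embed into a lattice of polynomial size, since merely forcing the join of each pair of atoms to be unique requires adding meet elements and can enlarge the element set substantially. This is precisely what makes the conjecture plausible, but also what makes it hard, as one must quantify how much the join/meet closure inflates $\mathcal{L}$ and whether that inflation linearizes the covering constraints. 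I would therefore begin by testing the two structured subclasses most relevant here, geometric (semimodular) lattices and the upper locally distributive lattices of Section~\ref{sec:ULD}, either extracting from them a structural lemma that generalizes, or producing an atomistic lattice on which the induced set-cover instance is provably hard, which would instead refute the conjecture.
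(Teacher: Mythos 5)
The statement you were asked to prove is not a theorem of the paper: it is Conjecture~\ref{conj:lattice}, which the authors explicitly leave open. The paper's only result in this direction is the accompanying Proposition of Section~\ref{sec:ULD}, a \emph{quasi}-polynomial-time algorithm obtained from the bound $\hn{\mathcal{L}}\leq\log_2|\mathcal{L}|$ by brute-forcing all atom subsets of at most logarithmic size. Your proposal does not close this gap, and you essentially say so yourself: after the reformulation, every candidate route (balanced incidence matrices, perfect-graph clique covers, a rank-by-rank dynamic program) is named but not executed, and you end by proposing to look for either a structural lemma or a counterexample. That is a research plan, not a proof; the entire mathematical content of the conjecture --- exhibiting lattice structure that places these instances in a polynomially solvable class, or refuting it --- remains untouched.

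What you do establish is correct and worth stating precisely: letting $M$ be the set of coatoms, a subset $H\subseteq A(\mathcal{L})$ satisfies $\bigvee H=\mathbf{1}$ if and only if for every $m\in M$ some $a\in H$ has $a\not\leq m$ (in a finite lattice every element below $\mathbf{1}$ lies below a coatom, so both directions go through), hence $\hn{\mathcal{L}}$ is a minimum set cover of $M$ by the sets $T_a=\{m\in M\mid a\not\leq m\}$. This is a clean coatom analogue of Corollary~\ref{cor:hull}, where the cuts of the cut-partition play the role of complementary coatom pairs. But note two things. First, this reformulation by itself buys nothing: set cover is NP-hard, and your two ``benign features'' do not help --- the logarithmic optimum only reproduces the paper's quasi-polynomial bound (enumerating subsets of size $\log_2|\mathcal{L}|$ costs $a^{O(\log|\mathcal{L}|)}$, not polynomial), and the asserted ``multiplicative closure along joins'' of the incidences is too vague to restrict the instance class in any demonstrated way. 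Second, your observation that arbitrary set-cover instances need not embed into polynomial-size lattices correctly explains why no easy NP-hardness reduction is available, but plausibility of the conjecture is not evidence for it. As it stands, your text would serve as a useful discussion paragraph motivating the conjecture, not as a proof of it.
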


\bigskip

We now proceed to the study of lattice of convex subgraphs of a partial cube. First, we need some further definitions:

An element $m$ of a lattice $\mathcal{L}$ is called \emph{meet-reducible} if it can be written as the meet of elements all different from itself and is called \emph{meet-irreducible} otherwise. It is easy to see that an element $x$ is meet-irreducible if and only if there is exactly one edge in the Hasse diagram leaving $x$ in upward direction. (Note that in particular the maximum of $\mathcal{L}$ is meet-reducible since it can be written as $\bigwedge \emptyset$.)

A lattice is called \emph{upper
locally distributive} or ULD if each of its
elements admits a unique minimal representation as the meet of meet-irreducible elements. In other words, for every $x\in\mathcal{L}$ there is a unique inclusion-minimal set $\{m_1,\ldots,m_k\}\subseteq\mathcal{L}$ of meet-irreducible elements such that $x=m_1\wedge\ldots\wedge m_k$.

ULDs were first defined by Dilworth~\cite{Dil-40} and have thereafter often
reappeared, see~\cite{Mon-85} for an overview until the mid 80s. In particular, the Hasse
diagram of a ULD is a partial cube, see e.g.~\cite{Fel-09}. 

Given a graph $G$ and a vertex $v$, denote
by $\mathcal{L}_G^v$ the lattice of convex subgraphs of $G$ containing $v$ endowed with inclusion order. A theorem of~\cite{Han-93} can then be rephrased as:
\begin{theorem}[\cite{Han-93}]
 Let $G$ be a partial cube and $v\in G$. Then $\mathcal{L}_G^v$ is a ULD.
\end{theorem}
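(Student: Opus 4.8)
The plan is to work directly with the half-space representation of Theorem~\ref{thm:equiv}(iii) and to establish the defining property of a ULD: that every element admits a unique inclusion-minimal representation as a meet of meet-irreducibles.

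First I would fix an isometric embedding $G\hookrightarrow Q^d$ with $d=|\mathcal{C}|$, normalized (by flipping coordinates) so that $v$ maps to the all-zero vertex $\mathbf 0\in\{0,1\}^d$; for $w\in V(G)$ write $\mathrm{supp}(w)\subseteq[d]:=\{1,\dots,d\}$ for the set of coordinates on which $w$ differs from $v$, so that the half-space $C_j(v)$ on the $v$-side of the $j$-th cut is $\{w:j\notin\mathrm{supp}(w)\}$. For a convex subgraph $K\ni v$ let $T(K)=\bigcup_{w\in K}\mathrm{supp}(w)$ be the set of cuts crossing $K$. Theorem~\ref{thm:equiv}(iii) then yields the two facts I rely on:
\[ K=\{w\in V(G):\mathrm{supp}(w)\subseteq T(K)\}=\bigcap_{j\notin T(K)}C_j(v). \]
Thus every element of $\mathcal{L}_G^v$ is an intersection of half-spaces, and a representation $K=\bigcap_{j\in B}C_j(v)$ is inclusion-minimal in $B$ exactly when the complementary coordinate set $U:=[d]\setminus B\supseteq T(K)$ is inclusion-maximal subject to $\{w:\mathrm{supp}(w)\subseteq U\}=K$. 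Proving the ULD property therefore reduces to showing that this maximal $U$ is unique.

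The crux is a single lemma. Call a coordinate $j\notin T(K)$ \emph{free} for $K$ if $\{w:\mathrm{supp}(w)\subseteq T(K)\cup\{j\}\}=K$, i.e. if re-admitting direction $j$ creates no new vertex, and let $F(K)$ collect the free coordinates. I would prove that free coordinates can be re-admitted simultaneously: for every $J\subseteq F(K)$ one has $\{w:\mathrm{supp}(w)\subseteq T(K)\cup J\}=K$. This is the hard part, and it is where the partial-cube hypothesis is genuinely used. I would argue by contradiction: take a \emph{new} vertex $w$ (one with $\mathrm{supp}(w)\subseteq T(K)\cup J$ but $\mathrm{supp}(w)\cap J\neq\emptyset$) of minimum support size and look at a geodesic from $v$ to $w$ in $G$. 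By Theorem~\ref{thm:equiv} such a geodesic uses no cut twice, hence flips each coordinate of $\mathrm{supp}(w)$ exactly once, so $w$ has a neighbour $w-e_k\in V(G)$ with $k\in\mathrm{supp}(w)$ and $\mathrm{supp}(w-e_k)=\mathrm{supp}(w)\setminus\{k\}$. If $k\in T(K)$, then $w-e_k$ is a smaller new vertex; if $k\in J$ and $\mathrm{supp}(w)\cap J\neq\{k\}$, the same holds; and if $\mathrm{supp}(w)\cap J=\{k\}$, then $\mathrm{supp}(w)\subseteq T(K)\cup\{k\}$ contradicts $k$ being free. Each case is a contradiction, proving the lemma. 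Applying it to $J=F(K)$ shows $U^\ast:=T(K)\cup F(K)$ is a valid $U$; and since any valid $U$ can only adjoin individually free coordinates, one checks $U\subseteq U^\ast$. Hence $U^\ast$ is the unique maximal $U$, so $K$ has a unique inclusion-minimal half-space representation.

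It remains to match ``half-space'' with ``meet-irreducible''. I would show that in a partial cube the meet-irreducibles of $\mathcal{L}_G^v$ are exactly the half-spaces $C_j(v)$. One direction is immediate from the previous paragraph: if the minimal representation of an element uses two or more half-spaces, it exhibits that element as a meet of strictly larger elements, so every meet-irreducible is a single $C_j(v)$. For the converse I would verify that each $C_j(v)$ is itself meet-irreducible by checking $U^\ast(C_j(v))=[d]\setminus\{j\}$: for $i\neq j$, if $C_i$ does not cross $C_j(v)$, then any $w$ with $i\in\mathrm{supp}(w)\subseteq T(C_j(v))\cup\{i\}$ would have $j\notin\mathrm{supp}(w)$, hence lie in $C_j(v)$ and force $C_i$ to cross $C_j(v)$, a contradiction; so every such $i$ is free and $j$ is the only coordinate outside $U^\ast(C_j(v))$. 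Consequently the unique minimal representation produced by the lemma is a representation by meet-irreducibles, which is precisely the ULD condition.
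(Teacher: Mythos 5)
Your proposal is correct, and it rests on the same foundation as the paper's argument---Theorem~\ref{thm:equiv}(iii), i.e.\ the fact that the convex subgraphs containing $v$ are exactly the intersections of $v$-sides of cuts---but the route from there to the ULD property is genuinely different. The paper (inside the proof of Theorem~\ref{th:ULD}) first proves a Claim characterizing the cover relations of $\mathcal{L}_G^v$ ($G'\prec G''$ iff $G''=\conv{G',u}$ for some $u$ adjacent to $G'$), uses it to identify the meet-irreducible elements as the sides of the cuts, and then only \emph{asserts} the unique minimal representation (``take the sides of those cuts that contain edges emanating from $G'$''), leaving the uniqueness verification terse; the cover-relation Claim earns its keep later, since it also drives the isometric-embedding half of Theorem~\ref{th:ULD}. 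You bypass cover relations entirely and instead attack uniqueness head-on in hypercube coordinates: your key lemma---that all free coordinates of $K$ can be re-admitted simultaneously, proved by taking a new vertex of minimum support and examining the last edge of a $(v,w)$-geodesic, which by Theorem~\ref{thm:equiv} flips exactly one coordinate of $\mathrm{supp}(w)$---is precisely the detail the paper glosses over, and your three-case analysis is airtight. Your identification of meet-irreducibles with the half-spaces $C_j(v)$ then recovers the paper's step 2 in reverse order; the only points you leave implicit are routine (that $\mathcal{L}_G^v$ is a lattice with meet equal to intersection, that distinct cuts give distinct sides, and that the maximum $\bigwedge\emptyset$ is meet-reducible), and the small bridge from $U^\ast(C_j(v))=[d]\setminus\{j\}$ to meet-irreducibility does follow in one line from your framework, since the intersection of elements with valid sets $U_1,U_2$ has valid set $U_1\cap U_2$. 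In short: the paper's proof is organized to serve the stronger characterization theorem, while yours is a self-contained, fully explicit proof of Handa's statement alone, and it supplies the uniqueness argument the paper omits.
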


\begin{figure}[t]
\centering
 \includegraphics[width=0.7\textwidth]{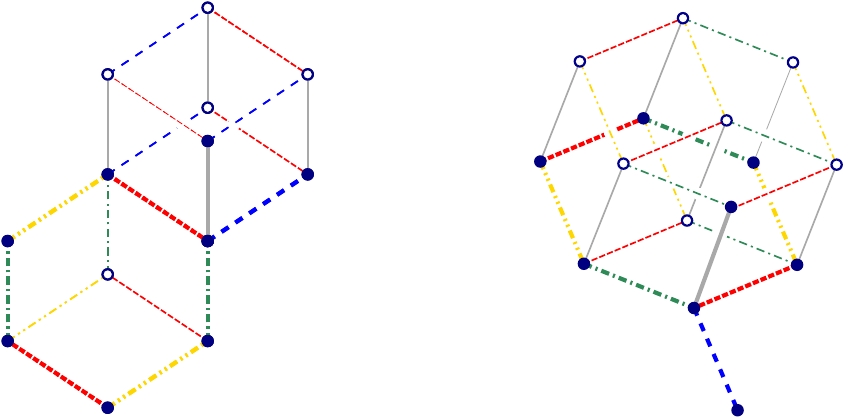}
\caption{Two ULDs obtained from the same partial cube (thick edges) by fixing a different vertex.} 
\label{fig:ULD}
\end{figure}

The following theorem extends Handa's theorem to a characterization of partial cubes according to the lattice of their convex subgraphs. 
% \begin{theorem}\label{th:ULD}
%  A graph $G$ is a partial cube if and only if for every vertex $v$ the poset $\mathcal{L}_G^v$ is a ULD whose Hasse diagram contains $G$ as an isometric subgraph. 
% \end{theorem}

\begin{theorem}\label{th:ULD}
Let $G$ be a graph, then the three following assertions are equivalent: 

$(i)$ $G$ is a partial cube.

$(ii)$ There exists a vertex $v$ in $G$, such that the poset $\mathcal{L}_G^v$ is a ULD whose Hasse diagram contains $G$ as an isometric subgraph. 

$(iii)$ For every vertex $v$, the poset $\mathcal{L}_G^v$ is a ULD whose Hasse diagram contains $G$ as an isometric subgraph. 
\end{theorem}

\begin{proof}
$(iii) \Rightarrow (ii)$ is immediate. We then start with $(ii)\Rightarrow (i)$. Since the diagram of any ULD is a partial cube, see e.g~\cite{Fel-09}, if $G$ is an isometric subgraph of it, then $G$ is a partial cube itself.

Now, to prove $(i) \Rightarrow (iii)$, let $G$ be a partial cube and $v\in V$. Since convexity is closed under  intersection, the set $\mathcal{L}_G^v$ equipped with the inclusion order is a lattice in which the meet-operation coincides with intersection. To characterize the meet-irreducible elements of $\mathcal{L}_G^v$, we rely on the following claim:

\begin{claim}
Given non-empty convex subgraphs $G',G''\subseteq G$, the subgraph $G''$ covers $G'$ in $\mathcal{L}_G^v$ if and only if there exists $u\in G''\backslash G'$ such that $G''=\conv{G',u}$ and such that there exists an edge $\{u,w\}$ in $G''$, where $w\in G'$.
 \end{claim}
\begin{proof}
If $G'\prec G''$, then there exists a set of vertices $U\subseteq G''\setminus G'$ such that $G''=\conv{G'\cup U}$. Consider one shortest path between a vertex of $G'$ and a vertex of $U$ and let $u$ be a vertex adjacent to $G'$ on this path. Then $G'\prec \conv{G',u}\leq G''$, and therefore $G''=\conv{G',u}$. 

Conversely, since $G'\subsetneq\conv{G',u}$, clearly $G'<G''$. Let now $C$ be the cut that contains $\{u,w\}$. Every shortest path between vertices in $G'\cup\{u\}$ uses only edges contained in $C$ or in cuts separating $G'$: if there is a shortest path from $u$ to $z\in G'$ using some $C'\neq C$, then since $C'$ is a cut some edge on any $(z,w)$-path is contained in $C'$.

Thus,  $\{C(V(G'))\mid C\in \mathcal{C}\}\setminus \{C(V(G''))\mid C\in \mathcal{C}\}=\{C(w)\}$. So by Theorem~\ref{thm:equiv}~(iii), there is no $G'''$ with $G'< G'''< G''$. 
 \end{proof}
A meet-irreducible element in a lattice is an element which is covered by exactly one other element. Therefore by the Claim the meet-irreducible elements  in $\mathcal{L}_G^v$ are precisely those convex subgraphs incident to precisely one $C\in\mathcal{C}$, i.e., the sides of the cuts in $\mathcal{C}$. 

Moreover every $G'$ has a unique minimal representation as intersection of sides -- take the sides of those cuts that contain edges emanating from $G'$. This is, in $\mathcal{L}_G^v$ every element has a unique minimal representation as meet of meet-irreducible elements, i.e., $\mathcal{L}_G^v$ is a ULD.

We are ready to show that the mapping $\phi(u):=\conv{v,u}$ is an isometric embedding of $G$ into $\mathcal{L}_G^v$:

If $\phi(u)=\phi(u')$, then by Theorem~\ref{thm:equiv} we have that $v$ and $u$ are separated by the same set of cuts as $v$ and $u'$. Since the cuts encode an embedding into the hypercube on a path from $u$ to $u'$ obtained by concatenating a shortest $(u,v)$-path with a shortest $(v,u')$-path each coordinate was changed twice or never, i.e., $u=u'$ and $\phi$ is injective. 

To see, that $\phi$ is edge-preserving let $\{u,u'\}$ be an edge. Then without loss of generality $u'$ is closer to $v$ than $u$. Thus by the Claim above we have $\conv{v,u'}\prec \conv{v,u}$. 

We still have to show that $\phi(G)$ is an isometric subgraph of the diagram of $\mathcal{L}_G^v$. 
From the proof that Hasse diagrams of ULDs are partial cubes it follows that the cuts in a cut-partition of a ULD correspond to its meet-irreducible elements, see~\cite{Fel-09}. Thus, in the cut partition of the diagram of $\mathcal{L}_G^v$, a cover relation $G'\prec G''$ is contained in a cut extending the cut $C\in\mathcal{C}$ of $G$ with $G'=G''\cap C(v)$. 
Now, a shortest path $P$ in $\phi(G)$ from $\conv{v,u}$ to $\conv{v,u'}$ corresponds to a path from $u$ to $u'$ in $G$ using no cut twice. Since cuts in $G$ correspond to cuts in $\mathcal{L}_G^v$, we have that $P$ does not use any cut of  $\mathcal{L}_G^v$ twice and is therefore also a shortest path of $\mathcal{L}_G^v$, by Theorem~\ref{thm:equiv}(ii).
\end{proof}

% THE FOLLOWING THEOREM IS NOT TRUE~\cite{Bjo-90}: IT WOULD BE INTERESTING TO CHARCATERIZE PARTIAL CUBES WHICH ARE LATTICES
% We say that a lattice $L$ has the colored Jordan Dedekind Chain Property iff its Hasse diagram $D_L$ admits an arc coloring $c:A(D_L)\to[d]$ such that for all pairs of elements of $L$ all maximal chains between them receive the same set of colors and 
% \begin{thm}
%  Let $G$ be a partial cube. The hull number of $G$ is $2$ if and only if $G$ is the cover-graph of a lattice.
% \end{thm}
% \begin{proof}
% $\Longrightarrow:$ Say $\conv{v,w}=G$. For $x,y\in V$ define $x<y$ iff $S(v,x)\subset S(v,y)$. This yields a poset $\mathcal{P}$ with unique minimal element $v$ and cover-graph $G$. To see that $\mathcal{P}$ is indeed a lattice it is enough to show that for all $x\parallel y\in\mathcal{P}$ there exists a unique smallest element above both of them, i.e., the \emph{join} of $x$ and $y$. Suppose there are $x,y$ contradicting our assumption. Choose them to be minimal, i.e., all $z,w$ with $z\leq x$ and $w\leq y$ where at least one of both inequlities is strict have a unique join. This yields that $x,y$ have a unique meet $m$. (blabla)
% \end{proof}

\section{Conclusions}\label{sec:con}
We studied the hull number problem for partial cubes and several subclasses. Apart from our contributions to the complexity of the hull number problem we think our most appealing results are the reinterpretations of this problem in seemingly unrelated mathematical settings. To illuminate this and reemphasize them, we repeat the conjectures made in this paper:

\begin{itemize}
 \item The dimension of a poset given its linear extension graph can be determined in polynomial-time. (Conjecture~\ref{conj:linext})
\item
A minimum hitting set for open interiors of a non-separating set of Jordan curves can be found in polynomial-time. (Conjecture~\ref{conj:planar})

\item
 Given an atomistic lattice $\mathcal{L}$ with maximum $\mathbf{1}$ and set of atoms $A(\mathcal{L})$. The minimum size of a subset $H\subseteq A(\mathcal{L})$ such that $\bigvee H=\mathbf{1}$ can be computed in polynomial-time. (Conjecture~\ref{conj:lattice})

\end{itemize}

\subsubsection{Acknowledgments.} 
The authors thank Victor Chepoi, Stefan Felsner, Matja{\v z} Kov{\v s}e, and Bartosz Walczak for fruitful
discussions. M.A.\ would also like to thank Stefan Felsner for his invitation to
the Discrete Maths group at the Technical University of Berlin, where this work
was initiated. M.A.\ acknowledges the
support of the ERC under the agreement ``ERC StG 208471 - ExploreMap" and of the ANR under the agreement ANR 12-JS02-001-01 ``Cartaplus''. K.K. was
supported by TEOMATRO (ANR-10-BLAN 0207) and DFG grant FE-340/8-1 as part of ESF
project GraDR EUROGIGA and PEPS grant EROS.

\bibliography{cublit}
\bibliographystyle{amsplain}
% 
% \newpage
% \section*{Appendices}
% \appendix
% \section{Proof of Theorem~\ref{thm:equiv}}\label{sec:pfequiv}
% 
% \section{Proof of Theorem~\ref{th:ULD}}\label{pf:ULD}
%  
%    
% 
% \section{Proof of Corollary~\ref{cor:NP}}\label{pf:NP}
% 

\end{document}